\newcommand {\ep} {\epsilon}
\newcommand {\ii} {\infty}
\newcommand {\dt} {\delta}
\newcommand {\al} {\alpha}
\newcommand {\bt} {\beta}
\newcommand {\lb} {\lambda}
\newcommand {\su} {\subset}
\newcommand {\wt} {\widetilde}
\newcommand {\mc} {\mathcal}
\newcommand{\gm} {\gamma}
\newcommand{\PM} {\mc P(\mc M)}
\newtheorem{teo}{Theorem}[section]
\newtheorem{pro}{Proposition}[section]
\newtheorem{cor}{Corollary}[section]
\newtheorem{lm}{Lemma}[section]
\theoremstyle{definition}
\newtheorem{rem}{Remark}[section]
\newtheorem{df}{Definition}[section]
\title{Individual Ergodic Theorems in Noncommutative Orlicz Spaces}
\keywords{Noncommutative Orlicz space, Dunford-Schwartz operator, uniform equicontinuity at zero, individual ergodic theorem}
\subjclass[2010]{47A35(primary), 46L52(secondary)}
\begin{document}
\date{January 31, 2016}

\begin{abstract}
For a noncommutative Orlicz space associated with a semifinite von Neumann algebra, a faithful normal semifinite trace and an Orlicz function satisfying
$(\dt_2,\Delta_2)-$condition, an individual ergodic theorem is proved.
\end{abstract}

\author{VLADIMIR CHILIN, \ SEMYON LITVINOV}
\address{The National University of Uzbekistan, Tashkent, Uzbekistan}
\email{vladimirchil@gmail.com; chilin@ucd.uz}
\address{Pennsylvania State University \\ 76 University Drive \\ Hazleton, PA 18202, USA}
\email{snl2@psu.edu}

\maketitle
\section{Introduction}
Development of the theory of noncommutative integration with respect to a faithful normal semifinite trace $\tau$ defined on a semifinite von Neumann algebra $\mc M$, has given rise to a systematic study of various classes of  noncommutative  rearrangement invariant  Banach spaces. The noncommutative $L^p-$spaces $L^p(\mc M,\tau)$ \cite{se, ye1, px} and, more generally,  noncommutative Orlicz spaces $L^\Phi(\mc M,\tau)$ \cite{m,m1,ku} are important examples of such spaces.

Since every $L^\Phi(\mc M,\tau)$ is an exact interpolation space for the Banach couple $(L^1(\mc M,\tau),\ \mc M)$,
for any linear operator $T: L^1(\mc M,\tau)+\mc M\to L^1(\mc M,\tau)+\mc M$ such that
$$
\| T(x)\|_{\ii}\leq \| x\|_{\ii} \ \ \forall \ x\in \mc M \text{ \ and \ } \| T(x)\|_1\leq \| x\|_1 \ \ \forall \ x\in L^1(\mc M,\tau)
$$
(such operators are called Dunford-Schwartz operators), we have
$$
T(L^\Phi) \su L^\Phi  \text{ \ and \ } \|T\|_{ L^\Phi \to L^\Phi} \leq 1.
$$
Thus, it is natural to study noncommutative Dunford-Shwartz ergodic theorem in $L^\Phi(\mc M,\tau)$.
The first result in this direction was obtained in \cite{ye} for the space $L^1(\mc M,\tau)$ (as it is noticed in \cite[Proposition 1.1]{cl},
the class of operators $\al$ that was employed in \cite{ye} coincides with the class of positive Dunford-Schwartz operators).
In \cite{jx}, the result of \cite{ye} was extended to the noncommutative $L^p-$spaces with $1<p< \infty$.
For general noncommutative fully symmetric  spaces with non trivial Boyd indexes, an individual ergodic theorem was established in \cite{cl}.

Note that the class of Orlicz spaces $L^\Phi(\mc M,\tau)$ is significantly wider than the class of spaces $L^p(\mc M,\tau)$. Besides,
there are Orlicz spaces $L^\Phi(\mc M,\tau)$, with the Orlicz function satisfying the so-called $(\dt_2,\Delta_2)-$condition, which
have trivial  Boyd index $p_{L^\Phi} =1$ (see Remark \ref{r3} below). Therefore an individual ergodic theorem for positive Dunford-Schwartz operators in
Orlicz spaces does not follow from the results mentioned above.

The aim of this article is to establish an individual ergodic theorem for a positive Dunford-Schwartz operator in
a noncommutative Orlicz space $L^\Phi(\mc M, \tau)$
associated with an Orlicz function $\Phi$ satisfying $(\dt_2,\Delta_2)-$condition. Our argument is essentially based on the notion of uniform equicontinuity in measure at zero of a sequence of linear maps from a normed space into the space of measurable operators
affiliated with $(\mc M,\tau)$. This notion was introduced in \cite{cl1} and then applied in \cite{li} to provide a simplified proof
of noncommutative individual ergodic theorem for positive Dunford-Schwartz operators in $L^p(\mc M,\tau)$, $1<p<\ii$.

\section{Preliminaries}

Assume that $\mc M$ is a semifinite von Neumann algebra with a faithful normal semifinite trace $\tau$,
and let $\mc P(\mc M)$ be the complete lattice of projections in $\mc M$.
If $\mathbf 1$ is the multiplicative identity of $\mc M$ and $e\in \mc P(\mc M)$,
we denote $e^{\perp}=\mathbf 1-e$.
Let $L^0=L^0(\mc M,\tau)$ be the $*$-algebra of $\tau$-measurable operators. Recall that $L^0$ is a
metrizable topological $*$-algebra with respect to the {\it measure topology} that can be equivalently
(see \cite[Theorem 2.2]{cls}) defined by either of the families
$$
V(\ep,\dt)=\{ x \in L^0: \|xe\|_{\ii}\leq \dt \text { \ for
some } e\in \PM \text {\ with } \tau (e^{\perp})\leq \ep \}
$$
or
$$
   W(\ep,\dt)=\{ x\in L^0 : \|exe\|_{\ii}\leq
\dt \text { \ for some } e\in \PM \text {\ with } \tau
(e^{\perp})\leq \ep \},
$$
$\ep>0, \ \dt>0$, of  neighborhoods of zero \cite{ne}.

For a positive  operator
$x=\int_{0}^{\infty}\lambda de_{\lambda} \in L^0$  one can define
$$
\tau(x)=\sup_{n}\tau \left (\int_{0}^{n}\lambda de_{\lambda}\right
)=\int_{0}^{\infty} \lb d\tau(e_{\lambda}).
$$
If $1\leq p< \infty$, then the noncommutative $L^p-$space
associated with $(\mathcal M, \tau)$ is defined as
$$
L^p=(L^p(\mc M,\tau), \| \cdot \|_p) =\{ x\in L^0: \| x\|_p=(\tau
(| x|^p))^{1/p}<\ii\} ,
$$
where $|x|=(x^*x)^{1/2}$ is the absolute value of $x$; naturally, $L^{\ii}=\mc M$.

For detailed accounts on noncommutative $L^p$-spaces, see \cite{px,ye1}.

Given $x\in L^0$, let $\{ e_{\lb}\}_{\lb\ge 0}$ be the spectral family of projections of $| x|$.
If $t>0$, the {\it $t$-th generalized singular number} of $x$ \cite{fk} is defined as
$$
\mu_t(x)=\inf\{\lb>0: \tau(e_{\lb}^{\perp})\leq t\}.
$$

A Banach space $(E, \| \cdot \|_E) \su L^0$ is called {\it fully symmetric} if the conditions
$$
x\in E, \ y\in L^0, \ \int \limits_0^s\mu_t(y)dt\leq  \int \limits_0^s\mu_t(x)dt \ \ \forall \ s>0
$$
imply that $y\in E$ and $\| y\|_E\leq \| x\|_E$.

If $L \subset L^0$, the set of all positive operators in $L$ will be denoted by $L_+$.

A  fully symmetric space $(E, \| \cdot \|_E)$ is said to possess
{\it Fatou property} if the conditions
$$
x_{\al}\in E_+, \ \ x_{\al}\leq x_{\bt} \text{ \ for } \al \leq \bt, \text{\ and\ }\sup_{\al} \| x_{\al}\|_E<\ii
$$
imply that there exists $x=\sup \limits_{\al}x_{\al}\in E$ and $\| x\|_E=\sup \limits_{\al} \| x_{\al}\|_E$.

Let $m$ be Lebesgue measure on the interval $(0,\ii)$, and let $L^0(0,\ii)$ be the linear space of all
(equivalence classes of) almost everywhere finite
complex-valued $m-$measurable functions on $(0,\ii)$.
We identify $L^{\ii}(0,\ii)$ with the commutative von Neumann algebra acting
on the Hilbert space $L^2(0,\ii)$ via
multiplication by the elements from $L^{\ii}(0,\ii)$ with the trace given by the integration
with respect to Lebesgue measure.
A fully symmetric space $E\su L^0(\mc M,\tau)$, where $\mc M=L^{\ii}(0,\ii)$ and $\tau$ is given by the Lebesgue
integral, is called {\it fully symmetric function space} on $(0,\ii)$.

Let $E=(E(0,\ii), \| \cdot \|_E)$ be a fully symmetric function space.
For each $s>0$ let $D_s: E(0,\infty) \to E(0,\infty)$ be the bounded linear operator given by
$$
D_s(f)(t) = f(t/s), \ t > 0.
$$
The {\it Boyd indices} $p_E$ and $q_E$ are defined as
$$
p_E=\lim\limits_{s\to\infty}\frac{\log s}{\log \|D_s\|_E}, \ \ q_E=\lim\limits_{s \to +0}\frac{\log s}{\log \|D_{s}\|_E}.
$$
It is known that $1\leq p_E\leq q_E\leq \ii$ \cite[II, Ch.2, Proposition 2.b.2]{lt}.
A fully symmetric function space is said to have {\it non-trivial Boyd indices} if $1<p_E$ and $q_E<\ii$.
For example, the spaces $L^p(0,\ii)$, $1< p<\ii$, have non-trivial Boyd indices:
$$
p_{L_p(0,\infty)} = q_{L_p(0,\infty)} = p
$$
\cite[II, Ch.2, 2.b.1]{lt}.

If $E$ is a fully symmetric function space  on $(0,\ii)$, define
$$
E(\mc M)=E(\mc M, \tau)=\{ x\in L^0(\mc M,\tau): \ \mu_t(x)\in E\}
$$
and set
$$
\| x\|_{E(\mc M)}=\| \mu_t(x)\|_E,  \ x\in E(\mc M).
$$
It is shown in \cite{ddp}  that $(E(\mc M), \| \cdot \|_{E(\mc M)})$ is a fully symmetric space.

If $1\leq p<\ii$ and $E=L^p(0,\ii)$, the space $(E(\mc M), \| \cdot \|_{E(\mc M)})$ coincides with the noncommutative
$L^p$-space $(L^p(\mc M, \tau), \| \cdot \|_p)$ because
$$
\| x\|_p=\left (\int \limits_0^{\ii}\mu_t^p(x)dt\right )^{1/p}=\| x\|_{L^p(\mc M, \tau)}.
$$
\cite[Proposition 2.4]{ye1}.

Since for a fully symmetric function space $E$ on $(0,\ii)$,
$$
L^1(0,\ii) \cap L^{\infty}(0,\ii) \su E \su L^1(0,\ii) + L^{\infty}(0,\ii)
$$
with continuous embeddings \cite[Ch.II, \S 4, Theorem 4.1]{kps}, we also have
$$
L^1(\mc M, \tau) \cap \mc M \subset E(\mc M, \tau) \subset L^1(\mc M, \tau) + \mc M,
$$
with continuous embeddings.

\begin{df}
A convex continuous at $0$ function $\Phi:[0,\ii)\to [0,\ii)$ such that $\Phi(0)=0$ and $\Phi(u)>0$
 if $u\ne 0$  is called an {\it Orlicz function}.
\end{df}

\begin{rem}
(1) Since an Orlicz function is convex and continuous at $0$, it is necessarily continuous on $[0,\ii)$.

\noindent
(2) If $\Phi$ is an Orlicz function, then $\Phi(\lambda u) \leq \lambda \Phi(u)$ for all $\lambda \in [0, 1]$.
Therefore $\Phi$  is increasing, that is, $\Phi(u_1) < \Phi(u_2)$ whenever $0 \leq u_1 < u_2$.
\end{rem}

We will need the following lemma.

\begin{lm}\label{li} Let $\Phi$ be an Orlicz function. Then for any given $\dt>0$ there exists $t>0$
satisfying the condition
$$
t\cdot \Phi(u)\ge u \text{ \ whenever \ } u\ge \dt.
$$
In particular, $\lim_{u\rightarrow \infty}\Phi(u) = \infty$.

\end{lm}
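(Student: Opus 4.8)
The plan is to exploit convexity of $\Phi$ to control the growth of $\Phi(u)/u$ from below. Fix $\dt>0$. By Remark (2) we have $\Phi(\dt)>0$, so set $c=\Phi(\dt)/\dt>0$. The key observation is that the function $u\mapsto \Phi(u)/u$ is nondecreasing on $(0,\ii)$: this is the standard consequence of convexity together with $\Phi(0)=0$, since for $0<u_1\le u_2$ one writes $u_1=(u_1/u_2)u_2+(1-u_1/u_2)\cdot 0$ and applies convexity to get $\Phi(u_1)\le (u_1/u_2)\Phi(u_2)$, i.e. $\Phi(u_1)/u_1\le \Phi(u_2)/u_2$. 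Hence for every $u\ge\dt$ we get $\Phi(u)/u\ge \Phi(\dt)/\dt=c$, that is, $c^{-1}\Phi(u)\ge u$. Thus $t=c^{-1}=\dt/\Phi(\dt)$ works.

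For the ``in particular'' claim, apply the above with $\dt=1$: there is $t_0>0$ with $\Phi(u)\ge u/t_0$ for all $u\ge 1$, and letting $u\to\ii$ forces $\Phi(u)\to\ii$.

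I do not expect any genuine obstacle here; the only point requiring a little care is justifying the monotonicity of $\Phi(u)/u$, which is exactly the inequality $\Phi(\lambda u)\le\lambda\Phi(u)$ for $\lambda\in[0,1]$ already recorded in Remark (2) (take $\lambda=u_1/u_2$). One could alternatively phrase the argument directly: for $u\ge\dt$, apply Remark (2) with the pair $(\dt,u)$ in place of $(\lambda u,u)$ to obtain $\Phi(\dt)\le(\dt/u)\Phi(u)$, which rearranges to $t\cdot\Phi(u)\ge u$ with $t=\dt/\Phi(\dt)$. Either route is a two-line computation.
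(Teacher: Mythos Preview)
Your proof is correct and in fact cleaner than the paper's. Both arguments ultimately rest on the same convexity fact --- that $\Phi(u)/u$ is nondecreasing, equivalently $\Phi(u)\ge au$ for $u\ge u_0$ whenever $a=\Phi(u_0)/u_0$ --- but the paper applies it in a more roundabout way: it first fixes an auxiliary $u_0>0$ with $\Phi(u_0)=au_0$, uses convexity to obtain $\Phi(u)\ge au$ for $u\ge u_0$, and then splits into the cases $\dt\ge u_0$ and $\dt<u_0$, handling the latter by a separate compactness-type argument on $[\dt,u_0]$. You avoid this case split entirely by taking $u_0=\dt$ from the start, which yields the explicit constant $t=\dt/\Phi(\dt)$ in one line. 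Your route is shorter and gives an explicit $t$; the paper's version gains nothing from the extra indirection.
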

\begin{proof}
Since $\Phi(u)>0$ as $u>0$, it is possible to find $a>0$ such that the equation $\Phi(u)=au$ has a solution $u=u_0>0$.
Then, as $\Phi$ is convex, we have $\Phi(u)\ge au$ for all $u\ge u_0$.

Fix $\dt>0$. If $\dt\ge u_0$, then we have
$$
\frac 1a \cdot \Phi(u)\ge u \ \ \ \forall \ u\ge \dt.
$$
 If $\dt<u_0$, then, since $\Phi(\dt)>0$ and
$\Phi$ is increasing on the interval $[\dt, u_0]$, there exists such $s>1$ that $s\cdot \Phi(u)\ge au$,
or
$$
\frac sa\cdot \Phi(u)\ge u, \ \ \forall \ u\ge \dt.
$$
\end{proof}

\begin{rem}
Since an Orlicz function $\Phi$ is continuous, increasing and such that $\lim_{u\rightarrow \infty}\Phi(u) = \infty$, there exists continuous increasing inverse function $\Phi^{-1}$ from $[0,\infty)$ onto $[0,\infty)$.
\end{rem}

If $\Phi$ is an Orlicz function, $x\in L^0_+$ and $x=\int_0^{\ii} \lb de_{\lb}$ its spectral decomposition,
one can define $\Phi(x)=\int_0^{\ii} \Phi(\lb) de_{\lb}$.
The {\it noncommutative Orlicz space} associated with $(\mc M,\tau)$ for an Orlicz function $\Phi$ is the set
$$
L^\Phi=L^\Phi(\mc M, \tau)=\left \{ x \in L^0(\mc M, \tau): \ \tau \left (\Phi\left (\frac {|x|}a \right )\right )
<\ii \text { \ for some \ } a>0 \right \}.
$$
The {\it Luxemburg norm} of an operator  $x \in L^{\Phi}$ is defined as
$$
\| x\|_\Phi=\inf \left \{ a>0:  \tau \left (\Phi\left (\frac {|x|}a \right )\right )  \leq 1 \right \}.
$$

\begin{teo}\label{t11} \cite[Proposition 2.5]{ku}.
 $(L^\Phi, \| \cdot \|_\Phi)$ is a Banach space.
\end{teo}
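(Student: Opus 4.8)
The plan is to verify the norm axioms for $\|\cdot\|_\Phi$ on $L^\Phi$ and then prove completeness through the measure topology. First I would check that $L^\Phi$ is a linear subspace of $L^0$ on which the norm is finite: convexity of $\Phi$ gives $\Phi(u/n)\le n^{-1}\Phi(u)$, hence (passing to functions of $|x|$ and using monotonicity and homogeneity of $\tau$) $\tau(\Phi(|x|/(na)))\le n^{-1}\tau(\Phi(|x|/a))$, so once $\tau(\Phi(|x|/a))<\ii$ for some $a$, choosing $n\ge\tau(\Phi(|x|/a))$ gives $\tau(\Phi(|x|/(na)))\le1$; thus the infimum defining $\|x\|_\Phi$ is over a nonempty set and is finite. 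Positive homogeneity is immediate by rescaling $a$ in the infimum, and $\|x\|_\Phi=0\Rightarrow x=0$ because if $x\ne0$ there are $\lb>0$ and $t_0>0$ with $\mu_t(x)\ge\lb$ on $(0,t_0)$, whence $\tau(\Phi(|x|/a))\ge t_0\,\Phi(\lb/a)\to\ii$ as $a\to0$ by Lemma \ref{li}, so no small $a$ is admissible.

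For the triangle inequality, suppose $\tau(\Phi(|x|/a))\le1$ and $\tau(\Phi(|y|/b))\le1$. Using the identity $\tau(\Phi(|z|))=\int_0^\ii\Phi(\mu_t(z))\,dt$ (valid for $\Phi$ increasing, continuous, $\Phi(0)=0$; see \cite{fk}), the submajorization $\int_0^s\mu_t(x+y)\,dt\le\int_0^s(\mu_t(x)+\mu_t(y))\,dt$ for all $s>0$, and the Hardy--Littlewood--P\'olya inequality $\int_0^\ii\psi(f)\le\int_0^\ii\psi(g)$ (valid whenever $f\prec\prec g$ and $\psi$ is convex increasing with $\psi(0)=0$), one obtains
$$
\tau\!\left(\Phi\!\left(\tfrac{|x+y|}{a+b}\right)\right)=\int_0^\ii\Phi\!\left(\tfrac{\mu_t(x+y)}{a+b}\right)dt\le\int_0^\ii\Phi\!\left(\tfrac{\mu_t(x)+\mu_t(y)}{a+b}\right)dt\le1,
$$
where the last step is convexity of $\Phi$ applied pointwise in $t$ together with $\int_0^\ii\Phi(\mu_t(x)/a)\,dt\le1$ and its analogue for $y$. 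This simultaneously shows $x+y\in L^\Phi$ and $\|x+y\|_\Phi\le\|x\|_\Phi+\|y\|_\Phi$, so $(L^\Phi,\|\cdot\|_\Phi)$ is a normed space.

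For completeness, the key observation is that $\|\cdot\|_\Phi$-convergence dominates convergence in the measure topology. If $a>\|x\|_\Phi$, then $\tau(\Phi(|x|/a))\le1$, and since $t\mapsto\Phi(\mu_t(x)/a)$ is nonincreasing,
$$
t\,\Phi\!\left(\tfrac{\mu_t(x)}{a}\right)\le\int_0^t\Phi\!\left(\tfrac{\mu_s(x)}{a}\right)ds\le\tau\!\left(\Phi\!\left(\tfrac{|x|}{a}\right)\right)\le1,
$$
so $\mu_t(x)\le a\,\Phi^{-1}(1/t)$ for every $t>0$, where $\Phi^{-1}:[0,\ii)\to[0,\ii)$ is the continuous increasing inverse of $\Phi$. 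Hence $\mu_t(x_n-x_m)\le\|x_n-x_m\|_\Phi\,\Phi^{-1}(1/t)$, which shows a $\|\cdot\|_\Phi$-Cauchy sequence $(x_n)\su L^\Phi$ is Cauchy in the measure topology; since $L^0$ is complete \cite{ne}, $x_n\to x$ in measure for some $x\in L^0$. To finish, fix $\ep>0$ and $N$ with $\tau(\Phi(|x_n-x_m|/\ep))\le1$ for all $n,m\ge N$, and fix $n\ge N$. As $m\to\ii$ we have $x_n-x_m\to x_n-x$ in measure, hence $|x_n-x_m|\to|x_n-x|$ and, for each truncation $\Phi_k=\min(\Phi,k)$ (bounded, continuous, vanishing at $0$), $\Phi_k(|x_n-x_m|/\ep)\to\Phi_k(|x_n-x|/\ep)$ in measure. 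By lower semicontinuity (the Fatou property) of $\tau$ on $L^0_+$,
$$
\tau\!\left(\Phi_k\!\left(\tfrac{|x_n-x|}{\ep}\right)\right)\le\liminf_m\tau\!\left(\Phi_k\!\left(\tfrac{|x_n-x_m|}{\ep}\right)\right)\le\liminf_m\tau\!\left(\Phi\!\left(\tfrac{|x_n-x_m|}{\ep}\right)\right)\le1,
$$
and letting $k\to\ii$ with normality of $\tau$ gives $\tau(\Phi(|x_n-x|/\ep))\le1$. Thus $x_n-x\in L^\Phi$ with $\|x_n-x\|_\Phi\le\ep$, so $x=x_n-(x_n-x)\in L^\Phi$ and $x_n\to x$ in $\|\cdot\|_\Phi$. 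I expect the main obstacle to be this last limiting step: it forces one to combine the continuity of the functional calculus along measure-convergent sequences with the lower semicontinuity and normality of $\tau$ (which is exactly why the truncations $\Phi_k$ are introduced); the triangle inequality, relying on the singular-value submajorization and the associated integral inequality, is the other nontrivial ingredient.
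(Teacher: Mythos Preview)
The paper does not prove this theorem at all: it simply quotes the result from Kunze \cite[Proposition 2.5]{ku} and moves on. So there is no argument in the paper to compare with; you have supplied a full, self-contained proof where the authors rely on an external reference.

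Your route is the standard one for noncommutative symmetric spaces and is essentially correct. The norm axioms are handled via the singular-value identity $\tau(\Phi(|x|))=\int_0^\ii\Phi(\mu_t(x))\,dt$, the submajorization $\mu(x+y)\prec\prec\mu(x)+\mu(y)$ from \cite{fk}, and the Hardy--Littlewood--P\'olya inequality for convex increasing functions; completeness is obtained by passing through the measure topology of $L^0$ and a Fatou-type argument. Two minor remarks: (1) the step ``$\Phi_k(|x_n-x_m|/\ep)\to\Phi_k(|x_n-x|/\ep)$ in measure'' is true but deserves a citation for continuity of the Borel functional calculus in the measure topology (e.g.\ Tikhonov, or one derives it from the algebra structure of $L^0$ together with uniform polynomial approximation of $\Phi_k$ on compact intervals after cutting with spectral projections); (2) the entire truncation device can be avoided by using the pointwise lower semicontinuity of singular values under measure convergence, namely $\mu_t(x_n-x)\le\liminf_m\mu_t(x_n-x_m)$ \cite[Lemma 3.4]{fk}, and then applying the classical Fatou lemma to $t\mapsto\Phi(\mu_t(\cdot)/\ep)$ directly. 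That shortcut gives the same conclusion in one line and sidesteps the functional-calculus continuity issue you flagged as the ``main obstacle''.
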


\begin{pro}\label{p12}
If $x \in L^\Phi$, then \ $\Phi(|x|) \in L^0$ and $\mu_t(\Phi(|x|))= \Phi(\mu_t(x))$, $t > 0$. In addition, $\tau (\Phi(|x|)) = \int_0^{\infty}\Phi(\mu_t(x)) dt$.
\end{pro}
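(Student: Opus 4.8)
The plan is to read off all three assertions from the spectral resolution $|x|=\int_0^{\ii}\lb\,de_\lb$ of $|x|$, using only the elementary properties of $\Phi$ established above and the standard calculus of generalized singular numbers; in fact only $x\in L^0$, not the full strength of $x\in L^\Phi$, will actually be needed. Since $\Phi$ is continuous on $[0,\ii)$, the operator $\Phi(|x|)=\int_0^{\ii}\Phi(\lb)\,de_\lb$ is a well-defined positive self-adjoint operator affiliated with $\mc M$. To see that $\Phi(|x|)\in L^0$: as $x$ is $\tau$-measurable, $\tau(e_\lb^{\perp})\to 0$ when $\lb\to\ii$, so, given $\ep>0$, one may pick $\lb_0$ with $\tau(e_{\lb_0}^{\perp})\le\ep$; the projection $e_{\lb_0}$ commutes with $\Phi(|x|)$ and, $\Phi$ being continuous and increasing, $\Phi(|x|)e_{\lb_0}$ has spectrum in $[0,\Phi(\lb_0)]$, hence $\|\Phi(|x|)e_{\lb_0}\|_{\ii}\le\Phi(\lb_0)<\ii$. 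Thus for each $\ep>0$ there is a projection $e\in\mc P(\mc M)$ with $\tau(e^{\perp})\le\ep$ reducing $\Phi(|x|)$ to a bounded operator, i.e. $\Phi(|x|)\in L^0$.

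For the singular-number identity, recall that $\Phi$ is continuous, strictly increasing, and satisfies $\lim_{u\to\ii}\Phi(u)=\ii$ (Lemma \ref{li} and the remark following it), hence is a continuous increasing bijection of $[0,\ii)$ onto itself with continuous increasing inverse $\Phi^{-1}$. Consequently the spectral family $\{f_\mu\}_{\mu\ge 0}$ of $\Phi(|x|)$ is obtained from $\{e_\lb\}$ by the change of variable $\mu=\Phi(\lb)$, namely $f_\mu=e_{\Phi^{-1}(\mu)}$ (matching the one-sided continuity convention under which $\mu_t(\cdot)=\inf\{\lb>0:\tau(e_\lb^{\perp})\le t\}$). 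Therefore, for $t>0$,
$$
\mu_t(\Phi(|x|))=\inf\{\mu>0:\tau(f_\mu^{\perp})\le t\}=\inf\{\Phi(\lb):\lb>0,\ \tau(e_\lb^{\perp})\le t\},
$$
and, $\Phi$ being continuous and increasing, the right-hand side equals $\Phi\big(\inf\{\lb>0:\tau(e_\lb^{\perp})\le t\}\big)=\Phi(\mu_t(x))$. (Equivalently, this is the case $g=\Phi$ of the known identity $\mu_t(g(|x|))=g(\mu_t(x))$ for continuous increasing $g$ with $g(0)=0$; see \cite{fk}.)

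Finally, applying to the positive $\tau$-measurable operator $\Phi(|x|)$ the standard formula $\tau(y)=\int_0^{\ii}\mu_t(y)\,dt$, valid for every $y\in L^0_+$ with both sides allowed to equal $+\ii$ (see \cite{fk}), together with the identity just proved, gives
$$
\tau(\Phi(|x|))=\int_0^{\ii}\mu_t(\Phi(|x|))\,dt=\int_0^{\ii}\Phi(\mu_t(x))\,dt.
$$
The only step requiring genuine care is the bookkeeping with spectral families in the middle paragraph: one must make sure the identification $f_\mu=e_{\Phi^{-1}(\mu)}$ is carried out consistently with whatever one-sided continuity convention is built into the definition of $\mu_t$. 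Everything else is a routine application of the monotonicity and continuity of $\Phi$ and of the cited properties of generalized singular numbers.
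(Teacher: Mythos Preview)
Your proof is correct. For the singular-number identity $\mu_t(\Phi(|x|))=\Phi(\mu_t(x))$ and the trace formula $\tau(\Phi(|x|))=\int_0^\ii \Phi(\mu_t(x))\,dt$ you do essentially what the paper does: both ultimately rest on the Fack--Kosaki results \cite[Lemma 2.5, Corollary 2.8]{fk} for continuous increasing $\varphi$ with $\varphi(0)=0$, and your direct spectral computation of $f_\mu=e_{\Phi^{-1}(\mu)}$ is simply an explicit unpacking of that citation.

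The one genuine difference is in showing $\Phi(|x|)\in L^0$. The paper exploits the hypothesis $x\in L^\Phi$: from $\tau(\Phi(|x|/a))<\ii$ one gets $\Phi(|x|/a)\in L^1$, hence $\tau(\{\Phi(|x|/a)>\lb\})<\ii$ for all $\lb>0$, and then the identity $\{\Phi(|x|/a)>\lb\}=\{|x|>a\Phi^{-1}(\lb)\}$ translates this into finiteness of $\tau(\{\Phi(|x|)>\mu\})$ for every $\mu>0$. Your argument is more elementary and, as you observe, more general: it needs only $x\in L^0$, using that $\tau(e_\lb^{\perp})\to 0$ as $\lb\to\ii$ together with $\|\Phi(|x|)e_{\lb_0}\|_\ii\le\Phi(\lb_0)$. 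The paper's route has the side benefit of making the stronger fact $\tau(\{|x|>\ep\})<\ii$ for all $\ep>0$ explicit (it is reused later in Proposition~\ref{p14}), but for the proposition at hand your argument is cleaner.
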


\begin{proof}
As $x \in L^\Phi$, we have $\tau \left (\Phi\left (\frac {|x|}a \right )\right )<\ii$ for some $a>0$. This implies that
$\Phi\left (\frac {|x|}a \right ) \in L^1$, so
$
\tau\left (\left \{\Phi\left (\frac {|x|}a \right ) > \lambda\right \}\right ) < \ii
$
for all $\lambda >0$. Since
$$
\left \{\Phi\left (\frac {|x|}a \right ) > \lambda \right \} = \left \{\Phi^{-1}\left (\Phi\left (\frac {|x|}a \right )\right )
> \Phi^{-1}(\lambda)\right \} = \{|x| > a \Phi^{-1}(\lambda)\},
$$
it follows that  $\tau \left ( \{\Phi(|x|) > \mu \}\right ) = \tau\left (\{|x| > \Phi^{-1}(\mu) \}\right ) < \infty$
for all $\mu >0$, thus $\Phi(|x|) \in L^0$.

By \cite[Lemma 2.5, Corollary 2.8]{fk}, given $x \in L^0$,  we have $\mu_t(\varphi(|x|))= \varphi(\mu_t(x))$, $t > 0$, for every continuous increasing function $\varphi: [0, \infty) \rightarrow [0, \infty) $ with $\varphi(0)= 0$ and, in addition, $\tau (\varphi(|x)|) = \int_0^{\infty}\varphi(\mu_t(x)) dt$. Therefore $\mu_t(\Phi(|x|))= \Phi(\mu_t(x))$  and $\tau (\Phi(|x)|) = \int_0^{\infty}\Phi(\mu_t(x)) dt$.
\end{proof}

Next result follows immediately from Proposition \ref{p12}.

\begin{cor}\label{c11}
$L^\Phi= \{x \in L^0: \mu_t(x) \in L^\Phi(0,\ii)\}$ and $ \| x \|_\Phi =  \| \mu_t(x) \|_\Phi $ for all $x \in L^\Phi$.
\end{cor}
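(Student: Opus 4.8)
The plan is to translate both assertions into the defining conditions by carrying the scaling parameter $a>0$ through the identity of Proposition \ref{p12}. The crucial observation is that for \emph{every} $x\in L^0$ and every $a>0$ one has, as an equality of values in $[0,\ii]$,
$$
\tau\!\left(\Phi\!\left(\frac{|x|}{a}\right)\right)=\int_0^{\ii}\Phi\!\left(\frac{\mu_t(x)}{a}\right)dt .
$$
Indeed, $\Phi$ is a continuous increasing function with $\Phi(0)=0$, so \cite[Lemma 2.5, Corollary 2.8]{fk} (which is precisely what underlies Proposition \ref{p12}) yields $\tau(\Phi(|y|))=\int_0^{\ii}\Phi(\mu_t(y))\,dt$ for the operator $y=a^{-1}x$, and the elementary scaling property $\mu_t(a^{-1}x)=a^{-1}\mu_t(x)$ converts this into the displayed identity.

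Granting this, I would argue as follows. Viewing $L^{\ii}(0,\ii)$ as a commutative von Neumann algebra with the trace $\tau_0$ given by Lebesgue integration, the right-hand side above is exactly $\tau_0\bigl(\Phi(a^{-1}f)\bigr)$, where $f=\mu_{\cdot}(x)$ is regarded as an element of $L^0(0,\ii)_+$; here one uses that a non-increasing right-continuous function coincides with its own generalized singular number function. Consequently the defining condition for $x\in L^\Phi(\mc M,\tau)$, namely that $\tau(\Phi(|x|/a))<\ii$ for some $a>0$, holds if and only if $\tau_0(\Phi(f/a))<\ii$ for some $a>0$, that is, if and only if $f=\mu_{\cdot}(x)\in L^\Phi(0,\ii)$. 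This gives the claimed set equality.

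For the norm identity, let $x\in L^\Phi$. By the displayed identity, a number $a>0$ satisfies $\tau(\Phi(|x|/a))\le 1$ if and only if it satisfies $\int_0^{\ii}\Phi(\mu_t(x)/a)\,dt\le 1$; taking the infimum over all such $a$ in the definition of $\|x\|_\Phi$ and, respectively, in the definition of the Luxemburg norm of $\mu_{\cdot}(x)$ in $L^\Phi(0,\ii)$, we obtain $\|x\|_\Phi=\|\mu_t(x)\|_\Phi$.

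There is essentially no obstacle here: everything reduces to the already-established identity together with standard properties of generalized singular numbers from \cite{fk}, namely the scaling property and the fact that a non-increasing right-continuous function is its own singular number function. The only mild point worth making explicit is that the identity of Proposition \ref{p12} must be read as an equality in $[0,\ii]$, so that it applies to operators not yet known to belong to $L^\Phi$, which is what the $\Leftarrow$ direction of the set equality requires.
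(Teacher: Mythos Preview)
Your proof is correct and follows the same route as the paper, which simply asserts that the corollary follows immediately from Proposition \ref{p12}. Your remark that the identity must be read in $[0,\ii]$ for arbitrary $x\in L^0$ (via the underlying Fack--Kosaki result) in order to obtain the $\Leftarrow$ inclusion is a legitimate refinement, since Proposition \ref{p12} as stated is formulated only for $x\in L^\Phi$.
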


If $(L^\Phi(0,\ii), \| \cdot \|_\Phi)$  is the Orlicz function space on $(0,\ii)$ for an Orlicz function $\Phi$,
then, by \cite[Ch.2, Proposition 2.1.12]{es},
it is  a rearrangement invariant function space.
Since  $(L^\Phi(0,\ii), \| \cdot \|_\Phi)$ has the Fatou property \cite[Ch.2, Theorem 2.1.11]{es}, Corollary \ref{c11},  \cite[Theorem 4.1]{ddst},  and \cite[Theorem 3.4]{ddp} yield the following.

\begin{cor}\label{c12}
$(L^\Phi,  \| \cdot \|_\Phi)$  is a fully symmetric space with the Fatou property
and an  exact interpolation space for the Banach couple $(L^1,\mc M)$.
\end{cor}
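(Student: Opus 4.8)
The plan is to deduce all three assertions from the corresponding facts about the Orlicz \emph{function} space $L^\Phi(0,\ii)$, transferring them to the noncommutative setting via Corollary \ref{c11}, which identifies $(L^\Phi,\|\cdot\|_\Phi)$ with the noncommutative symmetric space $L^\Phi(0,\ii)(\mc M,\tau)$ associated with $L^\Phi(0,\ii)$, with equality of the underlying sets and of the norms.

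First I would record the properties of the function space. By \cite[Ch.2, Proposition 2.1.12]{es}, $(L^\Phi(0,\ii),\|\cdot\|_\Phi)$ is a rearrangement invariant Banach function space on $(0,\ii)$, and by \cite[Ch.2, Theorem 2.1.11]{es} it has the Fatou property. A rearrangement invariant Banach function space on $(0,\ii)$ with the Fatou property is an exact interpolation space for the couple $(L^1(0,\ii),L^\ii(0,\ii))$ (a Calder\'on--Mityagin type theorem), and in particular it is fully symmetric; this is \cite[Theorem 4.1]{ddst}. Hence $L^\Phi(0,\ii)$ is a fully symmetric function space on $(0,\ii)$ with the Fatou property, so the construction $E \mapsto E(\mc M,\tau)$ recalled in the Preliminaries applies to it.

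Next I would invoke the noncommutative transfer. Since, by Corollary \ref{c11}, $L^\Phi = L^\Phi(0,\ii)(\mc M,\tau)$ isometrically, the result of \cite{ddp} recalled in the Preliminaries (that $E(\mc M,\tau)$ is fully symmetric whenever $E$ is) gives that $(L^\Phi,\|\cdot\|_\Phi)$ is a fully symmetric space. That it has the Fatou property and that it is an exact interpolation space for $(L^1(\mc M,\tau),\mc M)$ then follow from \cite[Theorem 3.4]{ddp}: for a fully symmetric function space $E$ with the Fatou property, the associated noncommutative space $E(\mc M,\tau)$ inherits the Fatou property and, being an intermediate space for $(L^1(\mc M,\tau),\mc M)$ — as ensured by the embeddings $L^1(\mc M,\tau)\cap\mc M\su L^\Phi\su L^1(\mc M,\tau)+\mc M$ noted earlier — is an exact interpolation space for that couple.

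The only point requiring attention, rather than genuine difficulty, is checking that the hypotheses of the cited theorems are literally satisfied here: that an Orlicz function in the sense of the Definition above (convex, continuous at $0$, strictly positive off $0$) produces a Banach function space to which \cite[Theorem 2.1.11, Proposition 2.1.12]{es} apply, and that the version of the Fatou property stated in the Preliminaries (monotone convergence along increasing nets) coincides with the one used in \cite{es,ddst,ddp}. Once these routine identifications are made, the corollary follows at once.
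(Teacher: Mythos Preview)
Your proposal is correct and follows essentially the same route as the paper: the paper also derives the corollary directly from Corollary~\ref{c11} together with \cite[Ch.~2, Proposition~2.1.12 and Theorem~2.1.11]{es}, \cite[Theorem~4.1]{ddst}, and \cite[Theorem~3.4]{ddp}. Your only addition is the explicit remark about matching hypotheses, which the paper leaves implicit.
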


We will also need the following property of the Luxemburg norm.
\begin{pro}\label{p12a}
If $x \in L_\Phi$ and $\| x\|_\Phi\leq 1$, then $\tau (\Phi(|x|) \leq \| x\|_\Phi$.
\end{pro}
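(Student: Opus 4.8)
The plan is to reduce the statement to the elementary inequality $\Phi(\lb u)\le\lb\Phi(u)$ for $\lb\in[0,1]$, recorded in the Remark following the definition of an Orlicz function, after first establishing that the infimum defining the Luxemburg norm is attained. Write $\al=\|x\|_\Phi$. If $\al=0$, then $x=0$ because $\|\cdot\|_\Phi$ is a genuine norm by Theorem \ref{t11}, and the claim is trivial; so I may assume $0<\al\le 1$.

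The first, and only delicate, step is to show that $\tau\big(\Phi(|x|/\al)\big)\le 1$. Since $\al=\|x\|_\Phi<\ii$, and since the set $\{a>0:\tau(\Phi(|x|/a))\le 1\}$ is upward closed (as $|x|/a'\le|x|/a$ and $\Phi$ is increasing when $a'\ge a$), I can pick a sequence $a_n\downarrow\al$ with $\tau\big(\Phi(|x|/a_n)\big)\le 1$ for all $n$. Writing $|x|=\int_0^\ii\lb\,de_\lb$, we have $\Phi(|x|/a_n)=\int_0^\ii\Phi(\lb/a_n)\,de_\lb$, and, as $\Phi$ is continuous and increasing, $\Phi(\lb/a_n)\uparrow\Phi(\lb/\al)$ for every $\lb\ge 0$; hence $\Phi(|x|/a_n)\uparrow\Phi(|x|/\al)$ in $L^0_+$. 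By normality of $\tau$ --- equivalently, the monotone convergence theorem for the measure $\lb\mapsto\tau(e_\lb)$ --- it follows that $\tau\big(\Phi(|x|/\al)\big)=\sup_n\tau\big(\Phi(|x|/a_n)\big)\le 1$.

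The second step is routine. Applying the scalar inequality $\Phi(\al v)\le\al\Phi(v)$, valid since $\al\le 1$, through the Borel functional calculus to the spectral decomposition of $|x|/\al$ gives $\Phi(|x|)\le\al\,\Phi(|x|/\al)$ in $L^0_+$, and then monotonicity and positive homogeneity of the (extended) trace yield
$$
\tau\big(\Phi(|x|)\big)\le\al\,\tau\big(\Phi(|x|/\al)\big)\le\al=\|x\|_\Phi,
$$
which is the assertion (and incidentally shows $\tau(\Phi(|x|))<\ii$). The main obstacle is precisely the attainment of the infimum in the first step; it hinges on the normality of $\tau$ together with the continuity and monotonicity of $\Phi$, which together force the monotone convergence $\Phi(|x|/a_n)\uparrow\Phi(|x|/\al)$. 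One could instead pass to the commutative model via Proposition \ref{p12}, replacing $\tau(\Phi(|x|))$ by $\int_0^\ii\Phi(\mu_t(x))\,dt$ and arguing with the scalar function $\mu_t(x)$, but the operator-level argument above seems shorter.
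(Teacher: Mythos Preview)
Your proof is correct, but it proceeds quite differently from the paper's own argument. The paper's proof is essentially a two-line reduction to the commutative case: it cites \cite[Ch.~2, Proposition~2.1.10]{es} for the scalar inequality $\int_0^{\ii}\Phi(|f|)\,dt\le\|f\|_\Phi$ when $f\in L^\Phi(0,\ii)$ with $\|f\|_\Phi\le 1$, and then transfers this to the operator setting via Proposition~\ref{p12} (which gives $\tau(\Phi(|x|))=\int_0^{\ii}\Phi(\mu_t(x))\,dt$) and Corollary~\ref{c11} (which gives $\|x\|_\Phi=\|\mu_t(x)\|_\Phi$). You instead work directly at the operator level: you first show the infimum defining the Luxemburg norm is attained, using normality of $\tau$ together with the monotone convergence $\Phi(|x|/a_n)\uparrow\Phi(|x|/\al)$ through the spectral calculus, and then push the scalar inequality $\Phi(\al v)\le\al\Phi(v)$ (from the Remark after the definition of an Orlicz function) through the functional calculus. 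Your route is longer but entirely self-contained---it does not rely on the external reference \cite{es} and in effect reproves the commutative statement along the way---whereas the paper's route is shorter but outsources the core inequality. Amusingly, your closing sentence already sketches the paper's approach as an alternative.
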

\begin{proof}
By \cite[Ch.2, Proposition 2.1.10]{es}, $\int_0^{\infty}\Phi(|f|) dt\leq \| f\|_\Phi$ for $f\in L^\Phi(0,\ii)$
with $\| f\|_\Phi \leq 1$. Thus the result follows from Propsition \ref{p12} and Corollary \ref{c11}.
\end{proof}

\begin{df}
An Orlicz function $\Phi$ is said to satisfy {\it $\Delta_2-$condition} ({\it $\delta_2-$condition}) if there exist $k>0$ and $u_0\ge 0$ such that
$$
\Phi(2u)\leq k\Phi(u) \ \ \forall \ u\ge u_0 \ \ \ \text{(respectively,\ } \Phi(2u)\leq k\Phi(u) \ \ \forall \ u \in (0, u_0]).
$$

\end{df}
If an Orlicz function $\Phi$ satisfies $\Delta_2-$condition and $\delta_2-$condition simultaneously, we will say that $\Phi$
satisfies {\it $(\delta_2, \Delta_2)-$condition}. In this case $\Phi(2u)\leq c\Phi(u)$ for all $u \ge 0$ and some $c>0$.
Clearly, every space $L^p$, $1\leq p<\ii$, is the Orlicz  space for the function $\Phi(u)=\frac{u^p}p, \ u\ge 0$, which 
satisfies $(\delta_2, \Delta_2)-$condition.

\begin{rem}\label{r3}
(i) If an Orlicz function $\Phi$ satisfies $\Delta_2-$condition, then the Boyd index  $q_{L^\Phi(0,\ii)} < \infty $, that is,
it is non-trivial (see \cite[II, Ch.2, Proposition 2.b.5]{lt}).

\noindent
(ii) The function $\Phi_{\alpha}(u) = u \ ln^{\alpha}(e + u), \ \alpha \geq 0$, is an Orlicz function that
satisfies $(\delta_2, \Delta_2)-$condition for which the Boyd index $p_{L^\Phi(0,\ii)}$ is trivial,
that is,  $p_{L^\Phi(0,\ii)} =1$ \cite[\S 5]{ss}.

\end{rem}
A Banach space $(E, \| \cdot \|_E)  \su L^0$ is said to have {\it order continuous norm} if $\| x_{\al}\|_E\downarrow 0$
for every net $\{ x_{\al}\} \su E$ with $x_{\al}\downarrow 0$.

\begin{pro}\label{p14}
Let an Orlicz function $\Phi$ satisfy $(\delta_2, \Delta_2)-$condition. Then
\begin{enumerate}[(i)]
\item The fully symmetric space $(L^\Phi,  \| \cdot \|_\Phi)$ has order continuous norm.

\item The linear subspace $ L^1 \cap \mc M$  is dense in $(L^\Phi,  \| \cdot \|_\Phi)$.
\end{enumerate}
\end{pro}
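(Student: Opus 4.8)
The plan is to prove (i) by a modular argument based on Proposition \ref{p12}, and then to deduce (ii) from (i) by spectral truncation.

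For (i), let $x_\al\downarrow 0$ with $\{x_\al\}\su L^\Phi$. Fix an index $\al_0$; passing to the cofinal subnet $\{\al\ge\al_0\}$ we may assume $0\le x_\al\le x_{\al_0}$ for all $\al$, and, replacing each $x_\al$ by $x_\al/a$ for a suitable $a>0$ (this multiplies every $\|x_\al\|_\Phi$ by $1/a$ and does not affect whether the limit is $0$), we may also assume $\tau(\Phi(x_{\al_0}))<\ii$, so that $\tau(\Phi(x_\al))\le\tau(\Phi(x_{\al_0}))<\ii$ for all $\al$ by Proposition \ref{p12} and the monotonicity of $\mu_t$ in the operator order. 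I would first show $\tau(\Phi(x_\al))\to 0$. By Proposition \ref{p12}, $\tau(\Phi(x_\al))=\int_0^\ii\Phi(\mu_t(x_\al))\,dt$. Since $x_\al\downarrow 0$ in $L^0$, the net converges to $0$ in measure; together with the monotonicity of $\mu_t$ this forces $\mu_t(x_\al)\downarrow 0$ for each fixed $t>0$ (were $\mu_{t_0}(x_\al)\ge\dt>0$ for all $\al$, one would get $\tau(\{x_\al>\dt/2\})\ge t_0$ for all $\al$, contradicting convergence in measure). As $\Phi$ is continuous with $\Phi(0)=0$, $\Phi(\mu_t(x_\al))\downarrow 0$ for a.e.\ $t$, dominated by $\Phi(\mu_t(x_{\al_0}))\in L^1(0,\ii)$; hence $\int_0^\ii\Phi(\mu_t(x_\al))\,dt\to 0$ by order continuity of the norm of $L^1(0,\ii)$.

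Next I would use the $(\dt_2,\Delta_2)$-condition, in the form $\Phi(2u)\le c\,\Phi(u)$ for all $u\ge 0$. Iterating, $\Phi(2^k u)\le c^k\Phi(u)$, so by Proposition \ref{p12} again, $\tau(\Phi(2^k x_\al))=\int_0^\ii\Phi(2^k\mu_t(x_\al))\,dt\le c^k\,\tau(\Phi(x_\al))$. Hence, for every $k$, once $\tau(\Phi(x_\al))\le c^{-k}$ one has $\tau(\Phi(2^k x_\al))\le 1$, i.e.\ $\|x_\al\|_\Phi\le 2^{-k}$; combined with the previous paragraph this gives $\|x_\al\|_\Phi\to 0$, proving (i). For (ii), since $L^\Phi$ is $*$-invariant and $L^1\cap\mc M$ is a $*$-invariant linear subspace, it suffices to approximate an arbitrary $x\in L^\Phi_+$. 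Writing $x=\int_0^\ii\lb\,de_\lb$, put $x_n=\int_{1/n}^{n}\lb\,de_\lb=x\,e_{[1/n,n]}\in\mc M$, so $\|x_n\|_\ii\le n$. Since $x\in L^\Phi$, necessarily $\mu_t(x)\to 0$ as $t\to\ii$ (otherwise $\mu_t(x)\ge\dt>0$ for all $t$, whence $\int_0^\ii\Phi(\mu_t(x)/a)\,dt=\ii$ for every $a>0$ and $x\notin L^\Phi$); consequently $\tau(\{x\ge 1/n\})<\ii$, so $\tau(x_n)\le n\,\tau(\{x\ge 1/n\})<\ii$ and $x_n\in L^1\cap\mc M$. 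Finally $x-x_n=x\,e_{[0,1/n)}+x\,e_{(n,\ii)}$ decreases to $0$ as $n\to\ii$ with $0\le x-x_n\le x\in L^\Phi$, so $\|x-x_n\|_\Phi\to 0$ by part (i).

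The substantive difficulty is the first part of the argument for (i): passing from $x_\al\downarrow 0$ to $\tau(\Phi(x_\al))\to 0$. Since an Orlicz function need not be operator monotone, $\{\Phi(x_\al)\}$ need not be a decreasing net, so one cannot apply normality of $\tau$ to it directly; the argument must go through the decreasing \emph{scalar} nets $t\mapsto\mu_t(x_\al)$, which requires both the chain ``$x_\al\downarrow 0\Rightarrow x_\al\to 0$ in measure $\Rightarrow\mu_t(x_\al)\downarrow 0$'' and a net form of dominated convergence in $L^1(0,\ii)$; the latter is supplied by the standard reduction of order continuity of the norm to its sequential version (via a cofinal sequence, valid since $(0,\ii)$ is $\sigma$-finite). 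Alternatively, one may bypass the modular computation entirely: by Corollary \ref{c11} it is enough that the Orlicz function space $L^\Phi(0,\ii)$ have order continuous norm, which is classical under the $(\dt_2,\Delta_2)$-condition and transfers to $L^\Phi(\mc M,\tau)$.
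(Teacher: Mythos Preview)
Your argument for (ii) is correct and essentially coincides with the paper's: both truncate $x\in L^\Phi_+$ spectrally to $x_n=\int_{1/n}^{n}\lb\,de_\lb\in L^1\cap\mc M$ and use order continuity to get $\|x-x_n\|_\Phi\to 0$. (The paper invokes \cite[Theorem 3.1]{dps} for the last step, whereas you apply part (i) directly to the decreasing sequence $x-x_n$; your version is slightly more self-contained.)

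For (i), the paper takes exactly the route you label ``alternatively'' at the end: it cites that $L^\Phi(0,\ii)$ has order continuous norm under the $(\dt_2,\Delta_2)$-condition \cite[Ch.2, \S 2.1]{es} and then transfers this to $L^\Phi(\mc M,\tau)$ via \cite[Proposition 3.6]{ddp1}. Your direct modular argument is a reasonable attempt to unpack that transfer, but the step ``$x_\al\downarrow 0$ in $L^0$ $\Rightarrow$ $x_\al\to 0$ in measure'' is not automatic and is stated without justification. It is false in general: on $L^\ii(0,\ii)$ the net $x_\al=\chi_{(\al,\ii)}$ satisfies $x_\al\downarrow 0$ yet $\mu_t(x_\al)\equiv 1$. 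What makes it true in your situation is that the dominating operator $x_{\al_0}$ is $\tau$-compact (i.e.\ $\mu_t(x_{\al_0})\to 0$ as $t\to\ii$), and establishing ``$x_\al\downarrow 0$, $x_\al\le y$ with $y$ $\tau$-compact $\Rightarrow$ $\mu_t(x_\al)\downarrow 0$'' in the noncommutative setting (where $x_\al$ need not commute with $y$) is precisely the substantive content of the transfer result \cite[Proposition 3.6]{ddp1} that the paper invokes. So your direct route, once this lemma is supplied, is correct but not genuinely more elementary; it reproduces the cited transfer theorem rather than bypassing it.
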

\begin{proof}
(i) As shown in \cite[Ch.2, \S 2.1]{es},
the fully symmetric space $(L^\Phi(0,\ii), \| \cdot \|_\Phi)$  has order continuous norm.
Therefore, by \cite[Proposition 3.6]{ddp1}, the noncommutative fully symmetric space
$(L^\Phi,  \| \cdot \|_\Phi)$  also has order continuous norm.

(ii) Let $x \in L^\Phi_+$, $n = 1,2, \dots$, and $e_n$ the spectral projection corresponding to the interval
$(n^{-1},n)$. It is clear that $\{x e_n\}\su  \mc M$ and $e_n^\perp \downarrow 0$.
Also, by (i) and \cite[Theorem 3.1]{dps}, we have
$$
\|x - x e_n \|_\Phi=\| x e_n^\perp \|_\Phi \to 0 \text{ \ as \ } n\to \ii.
$$
Now, since  $\tau (\{x > \ep\}) < \infty$  for all $\varepsilon > 0$  (see proof of Proposition \ref{p12}),
it follows that $\{x e_n\} \su L^1$.

Since, for an arbitrary $ x \in L^\Phi$, we have $x = x_1 - x_2 + i(x_3 - x_4)$, where $x_i \in L^\Phi_+$, $i = 1,...,4$,
the assertion follows.
\end{proof}

\section{Main Results}

Let $\mc M$ be a semifinite von Neumann algebra with a faithful normal semifinite trace $\tau$,
$L^0=L^0(\mc M,\tau)$ the $*$-algebra of $\tau$-measurable operators affiliated with $\mc M$, $L^p=L^p(\mc M,\tau)$, $1\leq p\leq \ii$, the noncommutative $L^p-$space
associated with $(\mc M,\tau)$.

\begin{df}
Let $(X, \| \cdot\|)$ be a normed space,  and let $Y\su X$ be such that the neutral element of $X$ is an
accumulation point of $ Y$. A family of maps
$A_\al:X \to L^0$, $\al\in I$, is called {\it uniformly equicontinuous in measure (u.e.m) (bilaterally uniformly equicontinuous in measure (b.u.e.m)) at zero on $Y$} if for every $\ep>0$ and $\dt>0$ there is $\gamma>0$ such that,
given $x\in Y$ with $\| x\|<\gamma$, there exists $e\in \mc P(\mc M)$ such that
$$
\tau(e^{\perp})\leq \ep \text{ \ and \ }  \sup_{\al\in I}\| A_\al(x)e\|_{\ii}\leq \delta \ \ ( \text{respectively,\ }
\sup_{\al\in I}\| eA_\al(x)e\|_{\ii}\leq \delta).
$$
\end{df}
\begin{rem}
As explained in \cite[Introduction]{li}, in the commutative case, the notion of uniform equicontinuity  in measure at zero
of a family $\{ A_n\}_{n\in \Bbb N}$ coincides with the continuity in measure at zero
of the maximal operator associated with this family.
\end{rem}

\begin{df}
A sequence  $\{ x_n\}\su L^0$ is said to converge to $x\in L^0$ {\it almost uniformly (a.u.)} ({\it bilaterally almost
uniformly (b.a.u.)}) if for every $\ep>0$ there exists such a projection $e\in\PM$ that $\tau(e^{\perp})\leq \ep$ and
$\| (x-x_n)e\|_{\ii}\to 0$ (respectively, $\| e(x-x_n)e\|_{\ii}\to 0$).
\end{df}

A proof of the following fact can be found in \cite[Theorem 2.1]{li}.
\begin{pro}\label{p2}
Let $(X,\| \cdot \|)$ be a  Banach space, $A_n:X\to L^0$ a sequence of additive maps.
If the family $\{ A_n\}$ is u.e.m. (b.u.e.m.) at zero on $X$, then the set
$$
\{ x\in X: \{ A_n(x)\} \text{\ converges a.u. (respectively, b.a.u.)}\}
$$
is closed in $X$.
\end{pro}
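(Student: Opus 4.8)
The plan is to prove directly that the set
$$
C=\{x\in X:\ \{A_n(x)\}\ \text{converges a.u.}\}
$$
is sequentially closed, which suffices since $X$ is a metric space; the b.a.u.\ case will be identical after replacing $A_n(\cdot)e$ by $eA_n(\cdot)e$ throughout. So one fixes $\{x^{(k)}\}\su C$ with $x^{(k)}\to x$ in $X$ and must show $x\in C$. Two elementary observations will be used repeatedly: (a) if $\{y_n\}\su L^0$ converges a.u., then for each $\ep>0$ there is $e\in\PM$ with $\tau(e^{\perp})\le\ep$ and $\sup_{n,m\ge N}\|(y_n-y_m)e\|_{\ii}\to 0$ as $N\to\ii$ (pick a projection on which $\{y_n\}$ converges uniformly to its limit and use the triangle inequality); and (b) if $g,e\in\PM$ with $g\le e$, then $\|yg\|_{\ii}\le\|ye\|_{\ii}$ and $\|gyg\|_{\ii}\le\|eye\|_{\ii}$ for every $y\in L^0$, since $g=eg=ge$.

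Next, fix $\ep>0$ and $\dt>0$. By uniform equicontinuity in measure at zero on $X$ there is $\gm>0$ such that every $z\in X$ with $\|z\|<\gm$ admits $e_1\in\PM$ with $\tau(e_1^{\perp})\le\ep$ and $\sup_n\|A_n(z)e_1\|_{\ii}\le\dt$. One chooses $k$ with $\|x-x^{(k)}\|<\gm$, sets $y=x-x^{(k)}$, and takes $e_1$ for $z=y$; since $x^{(k)}\in C$, observation (a) supplies $e_2\in\PM$ with $\tau(e_2^{\perp})\le\ep$ and $\sup_{n,m\ge N}\|(A_n(x^{(k)})-A_m(x^{(k)}))e_2\|_{\ii}\to 0$. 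With $e=e_1\wedge e_2$ one has $\tau(e^{\perp})\le 2\ep$, and by additivity of $A_n$, the triangle inequality, and (b),
$$
\|(A_n(x)-A_m(x))e\|_{\ii}\le\|(A_n(x^{(k)})-A_m(x^{(k)}))e_2\|_{\ii}+\|A_n(y)e_1\|_{\ii}+\|A_m(y)e_1\|_{\ii},
$$
so that $\limsup_{n,m}\|(A_n(x)-A_m(x))e\|_{\ii}\le 2\dt$. As $\tau(e^{\perp})\le 2\ep$ and $\ep,\dt$ were arbitrary, $\{A_n(x)\}$ is Cauchy in the measure topology, hence (by completeness of $L^0$) converges in measure to some $\xi\in L^0$.

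It remains to upgrade this to a.u.\ convergence to $\xi$. Fixing $\ep>0$, one runs the previous step for each $j\ge 1$ with $\ep,\dt$ replaced by $\ep\,2^{-j},\,2^{-j}$ (and a suitably large index $k$), obtaining $f_j\in\PM$ with $\tau(f_j^{\perp})\le 2\ep\,2^{-j}$ and $\limsup_{n,m}\|(A_n(x)-A_m(x))f_j\|_{\ii}\le 2^{-j+1}$. Then $f=\bigwedge_{j\ge 1}f_j$ satisfies $\tau(f^{\perp})\le 2\ep$, and since $f\le f_j$ for every $j$, (b) gives $\limsup_{n,m}\|(A_n(x)-A_m(x))f\|_{\ii}\le 2^{-j+1}$ for all $j$, i.e.\ $c_N:=\sup_{n,m\ge N}\|(A_n(x)-A_m(x))f\|_{\ii}\to 0$ as $N\to\ii$. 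Finally, for fixed $n\ge N$ one has $(A_n(x)-A_m(x))f\to(A_n(x)-\xi)f$ in measure as $m\to\ii$, while $\|(A_n(x)-A_m(x))f\|_{\ii}\le c_N$ for all $m\ge N$; since the norm ball $\{y\in L^0:\|y\|_{\ii}\le c_N\}$ is closed in the measure topology, $\|(A_n(x)-\xi)f\|_{\ii}\le c_N$ for all $n\ge N$, whence $\|(A_n(x)-\xi)f\|_{\ii}\to 0$. As $\tau(f^{\perp})\le 2\ep$ and $\ep$ was arbitrary, $\{A_n(x)\}$ converges a.u.\ to $\xi$, so $x\in C$ and $C$ is closed.

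The hard part is not any single step but the handling of the limits: additivity and the cut-down inequality (b) yield, for each pair $(\ep,\dt)$, only a crude estimate on a projection that depends on $(\ep,\dt)$, and one must pass from this to a genuine almost-uniform Cauchy condition on a single fixed projection by an exhaustion over a geometric sequence of parameters, and then identify the resulting almost-uniform limit with the limit in measure. Completeness of $L^0$ in the measure topology and the closedness of the norm balls of $\mc M$ there are precisely the ingredients that make this reconciliation work.
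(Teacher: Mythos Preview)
Your argument is correct. The paper itself does not supply a proof of this proposition; it simply refers the reader to \cite[Theorem~2.1]{li}. What you have written is precisely the standard noncommutative Banach--principle argument that underlies that reference: approximate $x$ by some $x^{(k)}\in C$, use the u.e.m.\ hypothesis to control $A_n(x-x^{(k)})$ on a projection of small trace-complement, combine with the a.u.\ Cauchy behaviour of $\{A_n(x^{(k)})\}$ on another such projection, and then pass from the resulting $(\ep,\dt)$--dependent estimates to a single projection via a geometric exhaustion, finally identifying the a.u.\ limit with the measure limit using the closedness of the $\|\cdot\|_\infty$--ball in the measure topology. All of the technical ingredients you invoke (continuity of multiplication in measure, $\tau\big((e_1\wedge e_2)^{\perp}\big)\le\tau(e_1^{\perp})+\tau(e_2^{\perp})$, lower semicontinuity of $\mu_t$ giving closedness of the ball) are standard and available in the paper's references, so the proof stands as written.
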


\begin{df}
A linear map $T: L^1+L^{\ii}\to L^1+L^{\ii}$ such that
$$
\| T(x)\|_{\ii}\leq \| x\|_{\ii} \ \ \forall \ x\in \mc M \text{ \ and \ } \| T(x)\|_1\leq \| x\|_1 \ \ \forall \ x\in L^1.
$$
is called a {\it Dunford-Schwartz operator}.
\end{df}

If $T$ is a Dunford-Schwartz operator (positive Dunford-Schwartz operator), we will write $T\in DS$ (respectively, $T\in DS^+$).
If $T\in DS$, consider its ergodic averages
\begin{equation}\label{eq4}
A_n(x)=A_n(T,x)=\frac 1n \sum_{k=0}^{n-1} T^k(x), \ \ x\in  L^1+L^{\ii}.
\end{equation}

Here is a noncommutative  maximal ergodic inequality due to Yeadon \cite{ye}
(for the assumption $T\in DS^+$, see a clarification given in \cite[Proposition 1.1, Remark 1.2]{cl}):

\begin{teo}\label{t4}
Let $T\in DS^+$ and $A_n:L^1\to L^1$, $n=1,2,\dots$ be given by (\ref{eq4}). Then for every
$x\in L^1_+$ and $\nu>0$ there exists a projection $e\in \mc P(\mc M)$ such that
$$
\tau(e^{\perp})\leq \frac {\| x\|_1}{\nu} \text{ \ and \ } \sup_n\| eA_n(x)e\|_{\ii}\leq \nu.
$$
\end{teo}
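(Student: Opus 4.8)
This is Yeadon's maximal ergodic inequality, so the shortest route is simply to cite \cite{ye} (with the identification of the operators involved as positive Dunford--Schwartz operators, clarified in \cite[Proposition 1.1, Remark 1.2]{cl}); below I outline the structure of a proof. The feature that makes the statement nontrivial, in contrast with the commutative case, is that the positive operators $A_n(x)$ have in general no supremum in $L^0$: there is no maximal operator, Chebyshev's inequality is not directly available, and one must construct by hand a single projection $e$ bounding all the averages simultaneously.

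Fix $x\in L^1_+$ and $\nu>0$ and put $S_n(x)=\sum_{k=0}^{n-1}T^k(x)=n\,A_n(x)\in L^1_+$, so that the conclusion $\|eA_n(x)e\|_{\ii}\leq\nu$ is equivalent to $e\bigl(S_n(x)-\nu n\,\mathbf 1\bigr)e\leq 0$ for every $n$. I would imitate the filling scheme behind Hopf's maximal ergodic lemma: set $g=x-\nu\,\mathbf 1\in L^1+\mc M$, define $B_1=g$ and $B_n=g+T\bigl((B_{n-1})_+\bigr)$ recursively, where $(\cdot)_+$ is the positive part, and prove by induction on $n$ --- using $T\in DS^+$, hence $T\geq 0$ and $0\leq T\mathbf 1\leq\mathbf 1$, together with $S_k(x)=x+T(S_{k-1}(x))$ --- the comparison
$$
S_k(x)-\nu k\,\mathbf 1\leq B_n\qquad(1\leq k\leq n).
$$
If one can also show that the support projections $p_n$ of $(B_n)_+$ satisfy $\tau(p_n)\leq\|x\|_1/\nu$ and may be taken to increase with $n$, then $e:=\bigl(\sup_n p_n\bigr)^{\perp}$ has $\tau(e^{\perp})=\sup_n\tau(p_n)\leq\|x\|_1/\nu$; and since $e\leq p_k^{\perp}$ forces $e(B_k)_+e=0$, whence $eB_ke\leq e(B_k)_+e=0$, the displayed comparison yields $e\,S_k(x)\,e\leq\nu k\,e$, i.e.\ $\|eA_k(x)e\|_{\ii}\leq\nu$, for all $k$.

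The main obstacle is precisely this last claim --- a noncommutative form of Hopf's maximal ergodic lemma. Classically it rests on two monotonicities, that $B_n$ increases in $n$ and that $t\mapsto t_+$ is monotone; one then compresses the inequality $B_n\leq g+T\bigl((B_n)_+\bigr)$ by $p_n$, applies $\tau$, and uses $\tau(T(y))\leq\tau(y)$ for $y\in L^1_+$ to conclude $0\leq\tau(p_n\,g\,p_n)=\tau(p_n x p_n)-\nu\,\tau(p_n)$. The first monotonicity can be arranged through the recursion, but the second fails in the operator setting: the positive-part map is not operator monotone, so $B_{n-1}\leq B_n$ does not give $(B_{n-1})_+\leq(B_n)_+$, and the compression/telescoping that drives the classical proof does not go through verbatim. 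Circumventing this, and thereby securing both the trace bound $\tau(p_n)\leq\|x\|_1/\nu$ and the monotonicity of the $p_n$, is the essential content of the theorem and is carried out in \cite{ye}; granted it, the induction, the passage to the limit projection, and the final compression are routine.
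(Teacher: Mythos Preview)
The paper does not prove this theorem at all: it is stated as Yeadon's maximal ergodic inequality and simply attributed to \cite{ye}, with the clarification from \cite[Proposition 1.1, Remark 1.2]{cl} that the operators in question are exactly the positive Dunford--Schwartz operators. Your ``shortest route'' is therefore precisely what the paper does, and nothing more is required here.

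Your additional sketch of the Hopf filling-scheme argument goes well beyond the paper and is a reasonable outline of how Yeadon's proof proceeds, including an honest identification of the genuine noncommutative difficulty (failure of operator monotonicity of the positive-part map). Since the paper offers no proof to compare against, there is nothing to contrast; if you want to include the sketch as expository material that is fine, but for the purposes of this paper the bare citation suffices.
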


Now, let $\Phi$ be an Orlicz function, $L^\Phi=L^\Phi(\mc M,\tau)$ the corresponding noncommutative
Orlicz space, $\| \cdot\|_\Phi$ the Luxemburg norm in $L_\Phi$.

As $L^\Phi$ is an exact interpolation space for the Banach couple
$(L^1,  \mc M)$ (see Corollary \ref{c12}),
\begin{equation}\label{eq5}
T(L^\Phi) \su L^\Phi  \text{ \ and \ } \|T\|_{ L^\Phi \to L^\Phi} \leq 1,
\end{equation}
hold for any $T\in DS$, and we have the following.

\begin{pro}\label{p4}
If $T\in DS^+$, then the family $A_n: L^\Phi \to L^\Phi, \ n=1,2,\dots$, given by (\ref{eq4})
is b.u.e.m. at zero on $(L^\Phi,\| \cdot \|_\Phi)$.
\end{pro}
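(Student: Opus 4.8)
The plan is to combine Yeadon's maximal ergodic inequality (Theorem~\ref{t4}) with a truncation of $x$ at a height governed by $\delta$, the passage from $L^\Phi$ to $L^1$ being controlled by Lemma~\ref{li} together with Propositions~\ref{p12} and~\ref{p12a}; note that no growth restriction on $\Phi$ is used.

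A preliminary reduction disposes of the non-positive case. Writing $x=(x_1-x_2)+i(x_3-x_4)$ with $x_j\in L^\Phi_+$ and $\|x_j\|_\Phi\le\|x\|_\Phi$ (an elementary consequence of the full symmetry of $L^\Phi$, Corollary~\ref{c12}), I would apply the positive case to each $x_j$ with $\ep,\delta$ replaced by $\ep/4,\delta/4$, obtain projections $e_1,\dots,e_4$, and set $e=e_1\wedge e_2\wedge e_3\wedge e_4$. Then $\tau(e^\perp)\le\sum_j\tau(e_j^\perp)\le\ep$, and since each $A_n$ is linear and $e\le e_j$ forces $\|eA_n(x_j)e\|_\ii\le\|e_jA_n(x_j)e_j\|_\ii$, one gets $\sup_n\|eA_n(x)e\|_\ii\le\delta$.

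For the positive case, fix $\ep>0$, $\delta>0$ and let $x\in L^\Phi_+$ with $\|x\|_\Phi<\gamma$, where $\gamma\le 1$ is chosen below. Put $s=\delta/2$, let $p=\mathbf 1_{(s,\ii)}(x)\in\PM$ (which has finite trace, by the argument in the proof of Proposition~\ref{p12}), and split
$$
x=y+z,\qquad y=x\,\mathbf 1_{[0,s]}(x)\in\mc M,\qquad z=xp\ge 0.
$$
The bounded part is harmless: $\|y\|_\ii\le s=\delta/2$, so $\|A_n(y)\|_\ii\le\|y\|_\ii\le\delta/2$ for every $n$, because a Dunford--Schwartz operator contracts $\mc M$. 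For the tail, Lemma~\ref{li} applied with the number $s$ gives $\kappa=\kappa(s)>0$ with $\kappa\Phi(u)\ge u$ for $u\ge s$; since the spectrum of $z$ lies in $\{0\}\cup[s,\ii)$ and $\kappa\Phi(0)=0$, functional calculus gives $z\le\kappa\Phi(z)$, and as $\Phi(z)=\Phi(x)p\le\Phi(x)$ we obtain, using Proposition~\ref{p12a},
$$
\|z\|_1=\tau(z)\le\kappa\,\tau(\Phi(z))\le\kappa\,\tau(\Phi(x))\le\kappa\,\|x\|_\Phi<\kappa\gamma .
$$
Thus $z\in L^1_+$, and Theorem~\ref{t4} applied to $z$ with $\nu=\delta/2$ yields $e\in\PM$ with $\tau(e^\perp)\le\|z\|_1/(\delta/2)<2\kappa\gamma/\delta$ and $\sup_n\|eA_n(z)e\|_\ii\le\delta/2$. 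Choosing $\gamma=\min\{1,\ \ep\delta/(2\kappa)\}$ makes $\tau(e^\perp)\le\ep$, while linearity of $A_n$ and the triangle inequality give
$$
\sup_n\|eA_n(x)e\|_\ii\le\sup_n\|A_n(y)\|_\ii+\sup_n\|eA_n(z)e\|_\ii\le\delta/2+\delta/2=\delta .
$$
This is precisely the b.u.e.m. property at zero on $L^\Phi_+$, and together with the reduction above it finishes the proof.

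The only step that is not routine is the estimate $\|z\|_1\le\kappa\,\tau(\Phi(x))$: it converts $L^\Phi$-smallness of $x$ into $L^1$-smallness of its high part, and it is exactly here that the Orlicz structure enters, through Lemma~\ref{li} and Propositions~\ref{p12}--\ref{p12a}. The truncation, the contractivity of $T$ on $\mc M$, the single use of Yeadon's inequality and the bookkeeping of constants are all standard.
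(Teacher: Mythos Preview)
Your proof is correct and follows essentially the same approach as the paper: truncate $x$ at height $\delta/2$, use Lemma~\ref{li} together with Proposition~\ref{p12a} to pass from $L^\Phi$ to $L^1$, and invoke Yeadon's maximal inequality (Theorem~\ref{t4}) for the unbounded part. The only cosmetic difference is that the paper uses the operator inequality $x\le x_\delta+t\,\Phi(x)$ and applies Theorem~\ref{t4} to $\Phi(x)$, whereas you decompose $x=y+z$ exactly and apply Theorem~\ref{t4} to the tail $z$ itself after estimating $\|z\|_1\le\kappa\,\tau(\Phi(x))$; the ingredients and constants match up.
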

\begin{proof}
It is easy to verify (see \cite[Lemma 4.1]{li}) that it is sufficient to show that $\{A_n\}$ is b.u.e.m.
at zero on  $(L^\Phi_+,\| \cdot \|_{\Phi})$.

Fix $\ep>0, \dt>0$. By Lemma \ref{li}, there exists $t>0$ such that
$$
t\cdot \Phi(\lb) \ge \lb \text{ \ as soon as \ } \lb \ge \frac {\dt}2.
$$
Let $\nu>0$ and $0<\gamma\leq 1$ be such that $\nu \leq \frac{\dt}{2t}$ and $\frac{\gamma}{\nu}\leq \ep$.

Take $x\in L^{\Phi}_+$ with $\| x\|_{\Phi}\leq \gamma$, and let $x=\int_0^{\ii}\lb de_{\lb}$ be its spectral decomposition.
Then we can write
$$
x=\int_0^{\dt/2}\lb de_{\lb}+\int_{\dt/2}^{\ii}\lb de_{\lb}\leq x_{\dt}+t\cdot \int_{\dt/2}^{\ii}\Phi(\lb) de_{\lb}
\leq x_{\dt}+t\cdot \Phi(x),
$$
where $x=\int_0^{\dt/2}\lb de_{\lb}$ and $\Phi(x)=\int_0^{\ii}\Phi(\lb) de_{\lb}$.

As $\| x_{\dt}\|_{\ii}\leq \frac{\dt}2$ and $T\in DS^+$, we have
$$
\sup_n\| A_n(x_{\dt})\|_{\ii}\leq \frac{\dt}2.
$$
Besides, by Proposition \ref{p12a}, $\| x\|_{\Phi}\leq 1$ implies that $\| \Phi(x)\|_1\leq \|x\|_M\leq \gamma$.
Since $\Phi(x)\in L^1_+$, in view of Theorem \ref{t4}, one can find a projection $e\in \mc P(\mc M)$ such that
$$
\tau(e^{\perp})\leq \frac{\| \Phi(x)\|_1}{\nu}\leq \frac{\gamma}{\nu}\leq \ep \text{ \ and \ }
\sup_n\| eA_n(\Phi(x))e\|_{\ii}\leq \nu \leq \frac{\dt}{2t}.
$$
Consequently,
$$
\sup_n\| eA_n(x)e\|_{\ii}\leq \sup_n\| eA_n(x_{\dt})e\|_{\ii}+t\cdot \sup_n\| eA_n(\Phi(x))e\|_{\ii}\leq \frac{\dt}2+
t\cdot  \frac{\dt}{2t}=\dt,
$$
and the proof is complete.
\end{proof}

Here is an individual ergodic theorem for noncommutative Orlicz spaces:

\begin{teo}\label{t5}
Assume that an Orlicz function $\Phi$ satisfy $(\dt_2,\Delta_2)-$condition. Then,
given $T\in DS^+$ and $x\in L^{\Phi}$, the averages (\ref{eq4}) converge b.a.u.  to some $\hat x\in L^{\Phi}$.
\end{teo}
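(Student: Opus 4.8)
The plan is to follow the Banach-principle scheme of \cite{li}: combine the equicontinuity established in Proposition \ref{p4} with the closedness statement of Proposition \ref{p2} and a density argument.

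Write $\mc C$ for the set of those $x\in L^\Phi$ for which the averages $A_n(x)$ defined by (\ref{eq4}) converge b.a.u. Since $(L^\Phi,\|\cdot\|_\Phi)$ is a Banach space (Theorem \ref{t11}) and, by (\ref{eq5}), each $A_n$ is a well-defined bounded linear (in particular additive) map $L^\Phi\to L^\Phi$, Proposition \ref{p4} shows that $\{A_n\}$ is b.u.e.m. at zero on $(L^\Phi,\|\cdot\|_\Phi)$; hence, by Proposition \ref{p2}, $\mc C$ is closed in $(L^\Phi,\|\cdot\|_\Phi)$.

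By Proposition \ref{p14}(ii) the subspace $L^1\cap\mc M$ is dense in $(L^\Phi,\|\cdot\|_\Phi)$ (here the $(\dt_2,\Delta_2)$-condition is used), so it suffices to check $L^1\cap\mc M\su\mc C$. If $x\in L^1\cap\mc M$ then $\|x\|_2^2=\tau(|x|^2)\leq\|x\|_\ii\|x\|_1<\ii$, so $x\in L^2$, and for operators in $L^2$ the averages $A_n(x)$ are known to converge b.a.u. (see \cite{jx}; alternatively \cite{cl}, since $L^2$ has non-trivial Boyd indices). Thus $L^1\cap\mc M\su\mc C$ and, $\mc C$ being closed, $\mc C=L^\Phi$: for every $x\in L^\Phi$ the averages $A_n(x)$ converge b.a.u. to some $\hat x\in L^0$.

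It remains to see that $\hat x\in L^\Phi$ (a priori it is only an element of $L^0$). Since b.a.u. convergence implies convergence in measure, $A_n(x)\to\hat x$ in $L^0$, while $\|A_n(x)\|_\Phi\leq\|x\|_\Phi$ for all $n$ by (\ref{eq5}). By lower semicontinuity of the generalized singular numbers with respect to convergence in measure, $\mu_t(\hat x)\leq\liminf_n\mu_t(A_n(x))$ for almost all $t>0$; setting $h_N=\inf_{n\geq N}\mu_t(A_n(x))$ one has $0\leq h_N\uparrow\liminf_n\mu_t(A_n(x))$ and $\|h_N\|_\Phi\leq\|A_N(x)\|_\Phi\leq\|x\|_\Phi$, so the Fatou property of $L^\Phi(0,\ii)$ (Corollary \ref{c12}) gives $\liminf_n\mu_t(A_n(x))\in L^\Phi(0,\ii)$, whence $\mu_t(\hat x)\in L^\Phi(0,\ii)$ and $\hat x\in L^\Phi$ with $\|\hat x\|_\Phi\leq\|x\|_\Phi$ by Corollary \ref{c11}.

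The main work has already been carried out in Proposition \ref{p4}, where the Orlicz structure enters through the decomposition of a positive operator into a bounded part and a $t$-multiple of $\Phi$ applied to it, reducing matters to Yeadon's maximal inequality (Theorem \ref{t4}); after that, the only point needing a little attention is the last one --- making sure that the b.a.u. limit of a norm-bounded sequence does not escape $L^\Phi$, which is precisely where the Fatou property is invoked.
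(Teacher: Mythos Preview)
Your proof is correct and follows essentially the same scheme as the paper: b.u.e.m.\ at zero (Proposition \ref{p4}) plus Proposition \ref{p2} plus density of $L^1\cap\mc M\su L^2$ (Proposition \ref{p14}(ii)) gives b.a.u.\ convergence on all of $L^\Phi$, and then the Fatou property forces $\hat x\in L^\Phi$. The only cosmetic difference is in this last step: the paper quotes \cite[Theorem 4.1]{ddst} (Fatou property $\Rightarrow$ unit ball closed in measure) to conclude directly, whereas you unfold the argument via lower semicontinuity of $t\mapsto\mu_t(\cdot)$ and the Fatou property of the commutative space $L^\Phi(0,\ii)$---both routes are equivalent.
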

\begin{proof}
Since, by Proposition \ref{p14}, the set $L^1\cap \mc M\su L^2$ is dense in $L^{\Phi}$
and the averages (\ref{eq4}) converge a.u., hence  b.a.u.,
for every $x\in L^2$ (see, for example, \cite[Theorem 4.1]{li}), it follows from Propositions \ref{p4} and \ref{p2} that
for any $x\in L^\Phi$ the averages (\ref{eq4}) converge b.a.u. to some $\hat x \in L^0$.

It is clear that a b.a.u. convergent sequence in $L^0$ converges in measure, hence $A_n(x)\to \hat x, \ x\in L^\Phi$, in measure.
Since, by Corollary \ref{c12}, $L^\Phi$ has the Fatou property, its unit ball is closed in the measure topology
\cite[Theorem 4.1]{ddst}, and (\ref{eq5}),
hence $\sup \limits_n \|A_n(x)\|_{ L^\Phi \to L^\Phi} \leq \|x\|_\Phi$, implies that $\hat x\in L^\Phi$.
\end{proof}

\begin{rem}
In was shown in \cite[Theorem 5.2]{cl} that if $E(0,\ii)$ is a fully symmetric function space with
Fatou property and non-trivial Boyd indices and $T\in DS^+$, then for any $x\in E(\mc M,\tau)$
the averages $A_n(x)$ converge b.a.u. to some  $\widehat{x} \in E(\mc M,\tau)$. According to Remark \ref{r3} (ii),
there exists   an Orlicz function $\Phi$ that satisfies $(\delta_2, \Delta_2)-$condition for which the Boyd index
$p_{L^\Phi(0,\ii)} $ is  trivial. Thus, Theorem \ref{t5} does not follow from Theorem \cite[Theorem 5.2]{cl}.

\end{rem}

Now we shall turn to a class of Orlicz spaces for which the averages (\ref{eq4}) converge a.u. The following
fundamental result is crucial.

\begin{teo}[Kadison's inequality \cite{ka}]
If $S:\mc M\to\mc M$ is a positive linear operator such that $S(\mathbf 1)\leq \mathbf 1$, then $S(x)^2\leq S(x^2)$ for
every $x^*=x\in \mc M$.
\end{teo}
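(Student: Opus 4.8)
The plan is to reduce to self-adjoint operators with finite spectrum and then pass to the general case by norm approximation. First note that $S$, being a positive linear map of a $C^*$-algebra with $S(\mathbf 1)\le\mathbf 1$, is automatically bounded: a positive linear map of a unital $C^*$-algebra satisfies $\|S\|=\|S(\mathbf 1)\|$, so here $\|S\|\le 1$. This norm-continuity is all that the limiting step will require, so it remains to settle the finite-spectrum case and then take limits.

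Assume then that $x=\sum_{i=1}^n\lambda_i p_i$ with $\lambda_1,\dots,\lambda_n$ real and $p_1,\dots,p_n\in\mc P(\mc M)$ mutually orthogonal projections summing to $\mathbf 1$ (the spectral decomposition of a self-adjoint operator with finite spectrum), so that $x^2=\sum_{i=1}^n\lambda_i^2 p_i$. Put $q_i=S(p_i)\ge 0$; then $\sum_{i=1}^n q_i=S(\mathbf 1)\le\mathbf 1$. Viewing $\mc M$ as acting on a Hilbert space $H$, define $W\colon H\to H^{\oplus n}$ by $W\xi=(q_1^{1/2}\xi,\dots,q_n^{1/2}\xi)$ and let $D=\mathrm{diag}(\lambda_1,\dots,\lambda_n)$ on $H^{\oplus n}$, a self-adjoint operator. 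Then $W^*W=\sum_{i=1}^n q_i\le\mathbf 1$, hence $\|W\|\le 1$ and thus $WW^*\le\mathbf 1$ on $H^{\oplus n}$, while $W^*DW=\sum_{i=1}^n\lambda_i q_i=S(x)$ and $W^*D^2W=\sum_{i=1}^n\lambda_i^2 q_i=S(x^2)$.

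The heart of the argument is then an operator version of completing the square: since $D^*=D$ and $WW^*\le\mathbf 1$, we have $D(WW^*)D\le D^2$, and conjugating this by $W$ (using that $A\le B$ implies $T^*AT\le T^*BT$, and that $W^*DW=S(x)$ is self-adjoint) gives
$$
S(x)^2=(W^*DW)^2=W^*D(WW^*)DW\le W^*D^2W=S(x^2),
$$
which proves the claim for $x$ with finite spectrum.

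For an arbitrary $x=x^*\in\mc M$ one approximates $x$ in the operator norm by Riemann--Stieltjes sums $x_m$ of its spectral resolution; these are self-adjoint with finite spectrum, $x_m\to x$ and $x_m^2\to x^2$ in norm, so by continuity of $S$ one gets $S(x_m)\to S(x)$, $S(x_m)^2\to S(x)^2$ and $S(x_m^2)\to S(x^2)$ in norm, and since the positive cone is norm-closed, the inequality $S(x_m)^2\le S(x_m^2)$ passes to the limit. I expect the main obstacle to be the key step itself: the naive ``discriminant'' approach---apply $S$ to $(x-\lambda\mathbf 1)^2\ge 0$ to get $S(x^2)-2\lambda S(x)+\lambda^2\mathbf 1\ge 0$ for all real $\lambda$ and test against unit vectors---only yields the weaker scalar bound $\langle S(x)\xi,\xi\rangle^2\le\langle S(x^2)\xi,\xi\rangle$, not the operator inequality. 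The dilation $W$ is precisely the device that carries out the optimization over $\lambda$ at the operator level, and setting it up correctly is the crux; everything else is routine.
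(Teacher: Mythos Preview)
The paper does not prove Kadison's inequality: it is stated as a cited result from \cite{ka} with no accompanying argument, so there is no proof in the paper to compare against. Your argument is correct and is in fact a clean modern proof --- the dilation $W$ with diagonal $D$ is essentially a finite-dimensional Stinespring-type trick, and the key identity $(W^*DW)^2=W^*D(WW^*)DW\le W^*D^2W$ is exactly the right operator-level replacement for the failed scalar discriminant argument you describe. The reduction to finite spectrum via norm approximation is routine, and your remark that boundedness of $S$ follows from $\|S\|=\|S(\mathbf 1)\|$ for positive unital maps is standard (and in any case only the self-adjoint bound $\|S(x)\|\le\|S(\mathbf 1)\|\,\|x\|$, immediate from $-\|x\|\mathbf 1\le x\le\|x\|\mathbf 1$, is needed for the limit).
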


\begin{df}
We call a convex function $\Phi$ on $[0,\ii)$ {\it $2-$convex} if the function $\wt \Phi(u)=\Phi(\sqrt u)$ is also convex.
\end{df}

For example, $\Phi(u)=\frac{u^p}p$, $u\ge 0$, is $2-$convex that satisfies $(\dt_2,\Delta_2)-$condition whenever $p\ge 2$.

It is clear that if $\Phi$ is a $2-$convex Orlicz  function, then $\wt \Phi$ is also an Orlicz function, and it is easy to
verify the following.

\begin{pro}\label{p5}
If $\Phi$ be a $2-$convex Orlicz function, then  $x^2\in L_{\wt \Phi}^+$
and $\| x^2\|_{\wt \Phi}=\| x \|_\Phi^2$ for every $x\in L^\Phi_+$.
\end{pro}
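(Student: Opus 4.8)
The plan is to reduce both assertions to the single functional‑calculus identity $\wt\Phi(x^2)=\Phi(x)$ for $x\in L^\Phi_+$, after which everything else is routine bookkeeping with the Luxemburg norm.

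First I would fix $x\in L^\Phi_+$. Recall that $\wt\Phi$ is itself an Orlicz function (as observed just before the statement), so $\wt\Phi$ is continuous, increasing, $\wt\Phi(0)=0$, and the positive operator $\wt\Phi(x^2)$ is defined via the functional calculus of the previous section. Since $x\ge 0$, for every $t\ge 0$ we have $\wt\Phi(t^2)=\Phi(\sqrt{t^2})=\Phi(t)$, hence, applying the functional calculus to $x$,
$$
\wt\Phi(x^2)=\Phi(x)=\Phi(|x|).
$$
By Proposition \ref{p12} this operator lies in $L^0$. Applying the same identity to $x/a$ for $a>0$ gives $\wt\Phi(x^2/a^2)=\wt\Phi((x/a)^2)=\Phi(|x|/a)$, and therefore
$$
\tau\!\left(\wt\Phi\!\left(\frac{x^2}{a^2}\right)\right)=\tau\!\left(\Phi\!\left(\frac{|x|}{a}\right)\right)\qquad\text{for all }a>0 .
$$

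From $x\in L^\Phi$ there is $a>0$ with $\tau(\Phi(|x|/a))<\ii$, so the displayed equality gives $\tau(\wt\Phi(x^2/a^2))<\ii$; thus $x^2\in L^{\wt\Phi}$, and since $x^2\ge 0$ we get $x^2\in L^{\wt\Phi}_+$. For the norm identity I would substitute $b=a^2$, an increasing bijection of $(0,\ii)$ onto itself, to obtain
$$
\left\{\,b>0:\ \tau\!\left(\wt\Phi\!\left(\frac{x^2}{b}\right)\right)\le 1\,\right\}
=\left\{\,a^2:\ a>0,\ \tau\!\left(\Phi\!\left(\frac{|x|}{a}\right)\right)\le 1\,\right\},
$$
and then pass to infima, using that $t\mapsto t^2$ is continuous and increasing on $[0,\ii)$, to conclude $\|x^2\|_{\wt\Phi}=\|x\|_\Phi^2$. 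The only step that requires any care is the functional‑calculus identity $\wt\Phi(x^2)=\Phi(x)$, i.e. correctly matching the spectral families of $x$ and $x^2$; this is entirely routine, so I do not expect a genuine obstacle here.
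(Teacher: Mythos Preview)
Your argument is correct and is exactly the routine verification the paper has in mind: the authors omit the proof entirely, stating only that ``it is easy to verify the following,'' and your reduction to the pointwise identity $\wt\Phi(t^2)=\Phi(t)$ together with the bijection $b=a^2$ in the Luxemburg infimum is the natural way to carry this out.
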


\begin{pro}\label{p6}
Let $\Phi$ be a $2-$convex Orlicz function. Then the family $\{A_n\}$ given by (\ref{eq4}) is u.e.m. at zero
on $(L^\Phi, \| \cdot \|_M)$.
\end{pro}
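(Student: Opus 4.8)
The plan is to deduce the one-sided estimate (u.e.m. at zero) from the bilateral one already obtained in Proposition \ref{p4}, by transferring control of $A_n(x^2)$ to control of $A_n(x)^2$ via Kadison's inequality; this is the Orlicz analogue of the argument available for $L^p$ with $p\ge 2$.

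First I would invoke \cite[Lemma 4.1]{li} to reduce the assertion to showing that $\{A_n\}$ is u.e.m. at zero on $(L^\Phi_+,\|\cdot\|_\Phi)$. Since $\Phi$ is $2$-convex, $\wt\Phi(u)=\Phi(\sqrt u)$ is again an Orlicz function, so Proposition \ref{p4} applies to the noncommutative Orlicz space $L^{\wt\Phi}$ and gives that $\{A_n\}$ is b.u.e.m. at zero on $(L^{\wt\Phi},\|\cdot\|_{\wt\Phi})$. Now fix $\ep>0$, $\dt>0$. Applying this property to the pair $(\ep,\dt^2)$ produces $\eta>0$ such that every $y\in L^{\wt\Phi}_+$ with $\|y\|_{\wt\Phi}<\eta$ admits a projection $e$ with $\tau(e^\perp)\le\ep$ and $\sup_n\|eA_n(y)e\|_\infty\le\dt^2$. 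Put $\gm=\sqrt\eta$ and take $x\in L^\Phi_+$ with $\|x\|_\Phi<\gm$; by Proposition \ref{p5}, $x^2\in L^{\wt\Phi}_+$ and $\|x^2\|_{\wt\Phi}=\|x\|_\Phi^2<\eta$, so there is a projection $e$ with $\tau(e^\perp)\le\ep$ and $\sup_n\|eA_n(x^2)e\|_\infty\le\dt^2$.

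It remains to pass from $A_n(x^2)$ to $A_n(x)^2$. As $T\in DS^+$ is positive with $T(\mathbf 1)\le\mathbf 1$, each $T^k$, hence each $A_n$, maps $\mc M$ into $\mc M$ positively with $A_n(\mathbf 1)\le\mathbf 1$, so Kadison's inequality yields $A_n(a)^2\le A_n(a^2)$ for every self-adjoint $a\in\mc M$. Applying this to the truncations $x_k=\int_0^k\lb\,de_\lb\in\mc M_+$ of $x=\int_0^\infty\lb\,de_\lb$ and using positivity of $A_n$ together with $x_k^2\le x^2$, one gets $A_n(x_k)^2\le A_n(x_k^2)\le A_n(x^2)$ for all $k$. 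Since $x-x_k=x e_k^\perp\downarrow 0$ with $\tau(e_k^\perp)=\tau(\{x>k\})\to 0$ (the latter because $x\in L^\Phi$; cf. the proof of Proposition \ref{p12}), we have $x_k\to x$, hence $A_n(x_k)\to A_n(x)$, in the measure topology; passing to the limit in $A_n(x_k)^2\le A_n(x^2)$, which is legitimate because $L^0_+$ and $\{z\in L^0:z\le A_n(x^2)\}$ are closed in the measure topology, we obtain $A_n(x)^2\le A_n(x^2)$ in $L^0$. Conjugating by $e$ and using $\|A_n(x)e\|_\infty^2=\|eA_n(x)^2e\|_\infty$ (valid since $A_n(x)=A_n(x)^*$), we conclude that for every $n$
$$
\|A_n(x)e\|_\infty^2=\|eA_n(x)^2e\|_\infty\le\|eA_n(x^2)e\|_\infty\le\dt^2,
$$
so $\sup_n\|A_n(x)e\|_\infty\le\dt$, which is precisely the u.e.m. estimate required on $L^\Phi_+$.

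I expect the main obstacle to be the step $A_n(x)^2\le A_n(x^2)$: Kadison's inequality is stated only for $\mc M\to\mc M$, so for unbounded $x\in L^\Phi$ it has to be recovered through the truncation argument above, which relies on the continuity of the averages $A_n$ (equivalently, of $T$) in the measure topology — or, what amounts to the same, on their order continuity on the positive cone of $L^0$ — a known feature of operators in $DS^+$. Everything else is routine: that $\wt\Phi$ is an Orlicz function, the bookkeeping of the constants $\ep,\dt,\eta,\gm$, and the operator-norm identity.
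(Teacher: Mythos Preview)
Your proof is correct and follows essentially the same route as the paper: reduce to $L^\Phi_+$, apply the bilateral equicontinuity of Proposition \ref{p4} in $L^{\wt\Phi}$ with threshold $\dt^2$, pass from $x$ to $x^2$ via Proposition \ref{p5}, and then convert the two-sided bound on $A_n(x^2)$ into a one-sided bound on $A_n(x)$ by Kadison's inequality together with $\|A_n(x)e\|_\infty^2=\|eA_n(x)^2e\|_\infty$. Your truncation argument justifying $A_n(x)^2\le A_n(x^2)$ for unbounded $x\in L^\Phi_+$ is in fact more careful than the paper, which simply invokes Kadison's inequality at that step without addressing the passage from $\mc M$ to $L^0$.
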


\begin{proof}
As it was noticed earlier, it is sufficient to show that $\{A_n\}$ is u.e.m. at zero on $(L^\Phi_+, \| \cdot \|_\Phi)$.

Fix $\ep>0$, $\dt>0$. By Proposition \ref{p4}, $\{ A_n\}$ is b.u.e.m. at zero on
$(L^{\wt \Phi}, \| \cdot \|_{\wt \Phi})$. Therefore there exists $\gm>0$  such that, given $y\in L^{\wt \Phi}$ with $\| y\|_{\wt \Phi}<\gm$,
$$
\sup_n\| eA_n(y)e\|_{\ii}\leq \dt^2 \text{ \ for some \ } e\in \mc P(\mc M) \text{ \ with \ } \tau(e^{\perp})\leq \ep.
$$

Now, let $x\in L^\Phi_+$ be such that $\| x\|_\Phi <\gamma^{1/2}$. Then, due to Proposition \ref{p5},
$x^2\in L^{\wt \Phi}$ and
$\| x^2\|_{\wt \Phi}=\| x\|_\Phi^2\leq \gamma$, implying that there is a projection $e\in \mc P(\mc M)$ such that
$$
\sup_n\| eA_n(x^2)e\|_{\ii}\leq \dt^2 \text{ \ and \ } \tau(e^{\perp})\leq \ep.
$$
Then, by Kadison's inequality,
$$
\left [ \sup_n\|A_n(x)e\|_{\ii} \right ]^2=\sup_n\|A_n(x)e\|_{\ii}^2=\sup_n\|eA_n(x)^2e\|_{\ii}\leq
$$
$$
\leq \sup_n\|eA_n(x^2)e\|_{\ii}\leq \dt^2,
$$
which completes the proof.
\end{proof}

Now, as in Theorem \ref{t5}, we obtain the following.

\begin{teo}\label{t6}
If an Orlicz function $\Phi$ satisfies $(\dt_2,\Delta_2)-$condition and is $2-$convex, then, given $T\in DS^+$ and $x\in L^{\Phi}$, the averages (\ref{eq4}) converge a.u. to some $\hat x\in L^{\Phi}$.
\end{teo}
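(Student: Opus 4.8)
The plan is to run the argument of Theorem \ref{t5} essentially unchanged, the only modification being that Proposition \ref{p6} (genuine uniform equicontinuity in measure at zero) takes the place of Proposition \ref{p4} (its bilateral counterpart); this substitution is exactly what upgrades ``b.a.u.'' to ``a.u.'' at every step, and it is available here because $\Phi$ is additionally assumed $2$-convex, which is the hypothesis of Proposition \ref{p6}.

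Fix $T\in DS^+$ and set
$$
G=\{x\in L^\Phi:\ \{A_n(x)\}\text{ converges a.u.}\}.
$$
First I would note that, since $\Phi$ satisfies $(\dt_2,\Delta_2)-$condition, Proposition \ref{p14}(ii) gives that $L^1\cap\mc M$ is dense in $(L^\Phi,\|\cdot\|_\Phi)$; moreover $L^1\cap\mc M\su L^2$, and $A_n(x)$ converges a.u.\ for every $x\in L^2$ by the $L^2-$case of the noncommutative individual ergodic theorem (see, for example, \cite[Theorem 4.1]{li}). Hence $G$ contains a dense subset of $L^\Phi$. On the other hand, by (\ref{eq5}) the maps $A_n:L^\Phi\to L^\Phi$ are well defined and linear, $(L^\Phi,\|\cdot\|_\Phi)$ is a Banach space (Theorem \ref{t11}), and Proposition \ref{p6}, which applies since $\Phi$ is $2$-convex, states that $\{A_n\}$ is u.e.m.\ at zero on $(L^\Phi,\|\cdot\|_\Phi)$; therefore Proposition \ref{p2} shows that $G$ is closed in $L^\Phi$. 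A closed subset of $L^\Phi$ containing a dense set is all of $L^\Phi$, so for every $x\in L^\Phi$ the averages $A_n(x)$ converge a.u.\ to some $\hat x\in L^0$.

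It remains to check that $\hat x\in L^\Phi$, and this I would do exactly as at the end of the proof of Theorem \ref{t5}: a.u.\ convergence implies convergence in measure, so $A_n(x)\to\hat x$ in the measure topology; by Corollary \ref{c12} the space $L^\Phi$ has the Fatou property, so its unit ball is closed in measure \cite[Theorem 4.1]{ddst}; and since $\|A_n(x)\|_\Phi\le\|x\|_\Phi$ for all $n$ by (\ref{eq5}), it follows that $\hat x\in L^\Phi$ with $\|\hat x\|_\Phi\le\|x\|_\Phi$. I do not expect a genuine obstacle, since the substantive work is already packaged in the preparatory results --- most of it in Proposition \ref{p6}, whose proof combines the $2$-convexity identity $\|x^2\|_{\wt\Phi}=\|x\|_\Phi^2$ (Proposition \ref{p5}), the bilateral equicontinuity of Proposition \ref{p4} applied to the Orlicz function $\wt\Phi$, and Kadison's inequality. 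The only point to watch is that \emph{both} hypotheses on $\Phi$ are genuinely used: the $(\dt_2,\Delta_2)-$condition for the density step (order continuity of the Luxemburg norm), and $2$-convexity for the equicontinuity step.
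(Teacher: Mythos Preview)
Your proposal is correct and matches the paper's approach exactly: the paper's entire proof of Theorem \ref{t6} is the sentence ``Now, as in Theorem \ref{t5}, we obtain the following,'' i.e., rerun the argument of Theorem \ref{t5} with Proposition \ref{p6} in place of Proposition \ref{p4}, which is precisely what you do. Your additional remarks on where each hypothesis is used are accurate and add nothing beyond what the paper implicitly assumes.
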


\end{document}